\documentclass{amsart}

%     If you need symbols beyond the basic set, uncomment this command.
\usepackage{amssymb}
\usepackage{hyperref}

%     If your article includes graphics, uncomment this command.
%\usepackage{graphicx}

%     If the article includes commutative diagrams, ...
%\usepackage[cmtip,all]{xy}

%     Update the information and uncomment if AMS is not the copyright
%     holder.
%\copyrightinfo{2009}{American Mathematical Society}

\newtheorem{theorem}{Theorem}[section]
\newtheorem{lemma}[theorem]{Lemma}
\newtheorem{proposition}[theorem]{Proposition}

\theoremstyle{definition}

\theoremstyle{remark}

\numberwithin{equation}{section}

% author package
\usepackage{amstext,esint,enumerate,color}

%our definition

\providecommand{\nor}[1]{\lVert{#1}\rVert}

\providecommand{\abs}[1]{\lvert{#1}\rvert}
\providecommand{\set}[1]{\{#1\}}
\providecommand{\scal}[2]{\langle{#1},{#2}\rangle}

\providecommand{\det}[1]{\operatorname{det}(#1)}

\providecommand{\dim}[1]{\operatorname{dim}(#1)}
\providecommand{\supp}[1]{\operatorname{supp}(#1)}

\newcommand{\di}[1]{\mathrm{d}#1}
\newcommand{\wh}[1]{\widehat{#1}}

% Numbers
\newcommand{\R}{\mathbb R}
\newcommand{\Cc}{\mathbb C}
\newcommand{\N}{\mathbb N}
\newcommand{\Z}{\mathbb Z}

% Hilbert spaces
\newcommand{\hh}{\mathcal H}

\newcommand{\NN}{\mathcal N}

% Greek letters
\newcommand{\la}{\lambda}
\newcommand{\eps}{\epsilon}

\newcommand{\T}{\mathcal T} % target space
 % target space
%\newcommand{\U}{\mathcal U}

\newcommand{\F}{\mathcal F} % Fourier transform

\newcommand{\wR}{\widehat{\R}}
\newcommand{\SO}{\mathrm{SO}(d)}
\newcommand{\SP}{\mathrm{S}^{d-1}}
\newcommand{\ts}{{}^t\!}

%to be removed

\begin{document}

% \title[short text for running head]{full title}
\title[Continuous and discrete frames of Schr\"odingerlets]{Continuous and discrete frames generated by the evolution flow of the Schr\"odinger equation}
\date{November 3, 2016}

%    Only \author and \address are required; other information is
%    optional.  Remove any unused author tags.

%    author one information
% \author[short version for running head]{name for top of paper}
\author[G.~S. Alberti]{Giovanni S.~Alberti}
\address{G. S.~Alberti, Seminar for Applied Mathematics, Department of Mathematics, ETH Z\"urich, 8092 Z\"urich, Switzerland.} 
\email{giovanni.alberti@math.ethz.ch}

\author[S. Dahlke]{Stephan Dahlke}
  \address{S. Dahlke, FB12 Mathematik und Informatik, 
    Philipps-Universit\"at Marburg, 
    Hans-Meerwein Stra{\ss}e,
    Lahnberge, 
    35032 Marburg, 
    Germany.} 
   \email{dahlke@mathematik.uni-marburg.de}

\author[F. De Mari]{Filippo~De~Mari}
\address{F. De Mari, Dipartimento di Matematica, Universit\`a di Genova, Via Dodecaneso 35, Genova, Italy.  }
\email{demari@dima.unige.it}

\author[E. De Vito]{Ernesto~De Vito}
\address{E. De Vito, Dipartimento di Matematica, Universit\`a di Genova, Via Dodecaneso 35, Genova, Italy.  }
\email{devito@dima.unige.it}

\author[S. Vigogna]{Stefano~Vigogna}
\address{S. Vigogna, Department of Mathematics, Duke University, 120 Science Drive, 27708 Durham NC, United States.}
\email{stefano@math.duke.edu}

\begin{abstract}
We study a family of coherent states, called Schr\"odingerlets, 
both in the continuous and discrete setting. They are  defined in
terms of the Schr\"odinger equation of a free quantum particle and
some of its invariant transformations.
\end{abstract}

\keywords{Schr\"odinger equation; unitary representations; frames; wavelets;  coherent states; reproducing formulae; Schr\"odingerlets.}
\subjclass[2010]{22D10, 42C40, 42C15.}

\maketitle

\section{Introduction}

In Quantum Mechanics, the time evolution of a $d$-dimensional free particle is
described by the Schr\"odinger equation
\begin{equation}
  \label{eq:10}
  \begin{cases}
    i \frac{\partial }{\partial t} f(x,t) = -\frac{1}{2\pi}\Delta
    f(x,t) \\
   f(\cdot,0)=f_0  ,
  \end{cases}
\end{equation}
where $\Delta$ is the Laplace operator acting on the ``space'' variable
$x\in\R^d$, and $f_0$ is a
square-integrable function on $\R^d$ describing the state of the
quantum particle at time zero (for the sake of simplicity, the mass is
normalized so that the Laplacian has the simple factor $1/2\pi$).

The aim of this paper is to introduce a new family of coherent states
({\em i.e.}\ a frame) generated by the time evolution unitary operator
defined by the Schr\"odinger equation; following
\cite{reproducingSPI, reproducingSPII, dede13}, its elements are called {\em
  Schr\"odingerlets}. 

Clearly, the time evolution operator $e^{i \frac{t}{2\pi} \Delta}$ is not
enough to generate a frame for $L^2(\R^d)$, hence we need to add 
other unitary transformations. Observe that equation~\eqref{eq:10}
is invariant both with respect to the rotations $R\in \SO$, under the
canonical action
\[ f(x,t)\mapsto f(Rx,t) ,\]
 and with respect to the dilations $a\in \R_+$, under the parabolic  action 
\[ f(x,t)\mapsto a^{\frac{d}{4}}  f(\sqrt{a}x,at),\]
where the factor $a^{\frac{d}{4}} $ ensures that the $L^2$-norm of
$f(\cdot,t)$ is preserved.
Thus, it is natural to consider the group $G=(\R\rtimes \R_+)\times
\SO$, {\em i.e.} the direct product of the identity
component of the one-dimensional affine group and $\SO$, and the
corresponding unitary representation $\pi$ acting  on $L^2(\R^d)$ as
\begin{equation}
\pi(t,a,R)f =a^{-\frac{d}{4}}  e^{i \frac{t}{2\pi} \Delta}
f_{a,R},\label{eq:26}
\end{equation}
where $f_{a,R}(x)= f(a^{-\frac{1}{2}} R^{-1}x)$. It follows that the
solution of~\eqref{eq:10} is given by
\[ f(x,t) =\pi(t,1,\operatorname{I}) f_0(x),\]
and for any rotation $R\in\SO$
\[ f(R x,t) =\pi(t,1,R^{-1}) f_0(x),\]
whereas for any dilation $a\in \R_+$
\[ a^{\frac{d}{4}}  f(\sqrt{a}x,at) = \pi(t,a^{-1},\operatorname{I}) f_0(x).\]

Our goal is to study the properties of the corresponding
family of coherent states $\set{\pi(x)\eta}_{x\in G}$ where $\eta$ is
a suitable ``ground state'', {\em i.e.} an admissible vector. In the
context of signal analysis, this 
amounts to analyzing the {\em voice transform} 
\[ f\mapsto \scal{f}{\pi(\cdot)\eta}\]
as a map from $L^2(\R^d)$ into a suitable Hilbert space of functions on
$G$.  We restrict ourselves to the $L^2$-framework, both in the
continuous and in the discrete
setting. Our main contribution is twofold. First, we show that
$\pi$ is a reproducing representation of $G$ and we
characterize its admissible vectors. This result was already known for
$d=2$ \cite{dede13}, 
and here we extend the proof to 
arbitrary $d$.  Furthermore, 
we construct a discrete Parseval
frame of the form $\set{\pi(x_i)\eta}_{i\in I}$, where  $\set{x_i}_{i\in I}$ is a suitable
sampling of $G$.  

In Section~\ref{sec:2D} we introduce the Schr\"odingerlets in two dimensions
and we discuss the construction of a Parseval frame of two-dimensional Schr\"odingerlets.
The purpose of this dimensionality restriction is twofold. Firstly, it allows to present
the main ideas of this work in a simpler way, so that it may
serve as a good introduction to the more involved general
setting. Secondly, the two-dimensional case is somehow different from
the higher dimensional cases, since when $d=2$  the spherical harmonics on $S^{d-1}$
correspond to the standard Fourier series; thus, a separate
presentation allows to underline the peculiarities of the case $d=2$.

Section~\ref{sec:dD} is devoted to studying the Schr\"odingerlets in any
dimension.  Proposition~\ref{prop:admis} shows that $\pi$ is a
reproducing representation and characterizes its admissible vectors. 
As a consequence, the Schr\"odingerlet voice transform permits
to represent the quantum states as  continuous functions on the parameter
space $\R\times \R_+\times \SO$. Time evolution and rotations correspond to 
translations in the first and third variable, respectively, whereas
dilations give rise to a multi-scale analysis of the original
quantum state. {However, the usual interpretation of
  multi-dimensional wavelets as a combination of (multi-dimensional)
  translations and dilations fails to apply here: translations are
  only one-dimensional, and act on the radial variable in the
  frequency domain, as we shall see below. 
%Thus, an intuitive
%  visualization of the Schr\"odingerlets is harder to achieve.
}
{To give a physical  intuition,  let us consider the $3D$ case 
where  $f$, up to a normalization, can be regarded as
the state of a quantum spinless particle and $\eta$ as a probe
state.   If the momentum distribution of $\eta$ is concentrated around
a ball with center $p_0\neq 0$  and radius $\delta$,  clearly the momentum
distribution of $\psi=\pi(0,a,R)\eta$ is concentrated around  a ball
with  center in  $p=Rp_0/a$ and radius $\delta/a$. Hence
$\psi(t)=\pi(t)\psi$ is the free quantum  evolution of the state
$\psi$ and the probe
particle moves in the direction of $p$.  It follows that
$|\scal{f}{\pi(t,a,R)\eta}|^2$ is the transition probability between
$f$ and $\psi(t)$, where $a$ controls the scale change, $R$ the
rotation and $t$ the time. For the classical
directional wavelet transform, $a$ and $R$ play the same role, but the
quantum evolution of $\psi$  is  replaced by a $3D$ translation in the
space. We note that the lack of spatial translations in Schoredingerlets makes hard (if not impossible) to use them for a micro-local analysis, as for example wavefront sets resolution of the signals.} 
%{the Schr\"odingerlet coefficients
%$\scal{f}{\pi(t,a,R)\eta}$ may be seen as details of $f$ at scale $a$,
%time $t$ and in direction $R$. 
%}

The main result of the paper is Theorem~\ref{thm:main}, which  provides sufficient conditions in
order to have a Parseval discrete frame.

We refer to \cite{alanga14, fuhr05} for a general
introduction to coherent states and reproducing formul\ae \ associated
with unitary representations.  Schr\"odingerlets in dimension two were
first introduced in \cite{dede13} and further discussed in
\cite{reproducingSPI,reproducingSPII}, where $G$ is regarded as a
closed subgroup of the symplectic group and $\pi$ is equivalent to the
restriction to $G$ of the metaplectic representation, whose role in
signal analysis has been investigated in a series of papers
\cite{codeno06b,codeno06a,codeno10,cota14,king09}.
 We remark that the representation $\pi$ is reducible and its 
reproducing kernel is not  integrable. Hence, we cannot directly apply
the classical theory of square-integrable representations by Duflo and
Moore \cite{dfmo76}, nor the coorbit space theory developed by
Feichtinger and Gr{\"o}chenig \cite{fegr89a,fegr89b}.

Another construction {of reproducing representations} based on the covariance properties of a free
  quantum particle is given by
the coherent states associated to the isochronous Galilei group (see
\cite[Chapter 8.4.2]{alanga14} and references therein). However,  in
this case, the dilations are not present and the frame does not depend on the
time parameter. Indeed, in order to make the representation square-integrable
it is necessary to reduce the Galilei group by taking the
quotient modulo a group that contains the time translations.

The proof that $\pi$ is a reproducing representation is based on the
general theory developed in \cite{dede13}.
However, in our case, the fact that $\pi$ is the direct sum of a countable family of square-integrable representations $\pi_i$ causes
additional difficulties. A general approach to obtain a discrete frame without
assuming that the kernel is in $L^1(G)$ has been developed in
\cite{ch12,chol09,chol11,fugr07}, but it requires the boundedness of a
suitable convolution operator (see condition (R3) of \cite{ch12}),
which is hard to prove in our setting.  We follow here a different
approach.  Taking into account that $\pi=\bigoplus_i\pi_i$, the
discretization is achieved by a slight generalization of a well known
result on discrete wavelet frames in $L^2(\R)$ \cite[Theorem~1.6,
Chapter~7]{herweis96}, by Schur's orthogonality relations for finite
groups and a technical lemma about Parseval frames
(Lemma~\ref{lem:4}).  Comparing with the approach taken in
\cite{ch12}, we are able to provide only Hilbert frames; we hope to
extend our results in future work and succeed in describing Banach
frames related to the function spaces introduced in
\cite{chol09,chol11,dahlke2014coorbit}. {We also plan to investigate the relation between the decay of the Schr\"odingerlet coefficients and the smoothness properties of the analyzed function.}

\section{The main result in two dimensions}
\label{sec:2D}

We state here the main result of this paper particularized for
two-dimensional signals. In the first part of the section we
introduce the continuous Schr\"odingerlets following
\cite{dahlke2014coorbit}.  

\subsection{The continuous Schr\"odingerlets in 2D}
For $d=2$ we identify the abelian group $\mathrm{SO}(2)$ with the one
dimensional torus $\T=\R/2\pi\Z$ as
\[
\theta\longleftrightarrow
R_\theta=\left[\begin{matrix}
\cos\theta & -\sin\theta \\ \sin\theta &\cos\theta
\end{matrix}\right] .
\]
The group $G$ is $(\R\rtimes \R_+)\times \T$ and its elements
are denoted by $(b,a,\theta)$, writing $b$ instead of the time variable $t$.
In order to better visualize the action of $\pi$ given
by~\eqref{eq:26}, it is worth rewriting it in an equivalent
formulation by means of an intertwining operator $S$ which we shall
now define. We work in the Fourier domain with polar coordinates, and
then perform a Fourier series with respect to the angular
variable.

Below we write $\wR^2$ for the dual space to $\R^2$
and $\mathrm{d}\xi$ and $\mathrm{d}x$ for the corresponding Lebesgue measures.
We denote by $\mathrm{d}\theta$ the Riemannian measure of
$\T$ (so that $\int_{\T} \mathrm{d}\theta=2\pi$). 
We
let $\F\colon L^2(\R^2)\to L^2(\wR^2)$ denote the Fourier transform
given by 
\[
\mathcal F f(\xi)= \int_{\R^2} f(x) e^{-2\pi i x\cdot \xi}\, \di x\qquad\xi\in\wR^2
\]
whenever $f\in L^1(\R^2)\cap L^2(\R^2)$ and $x\cdot \xi$ is the
Euclidean scalar product. 

Define the unitary operator $J\colon  L^2(\wR^2) \to L^2(\wR_+\times \T)$ by
\[ J\hat{f}(\omega,\theta)= \hat{f}(\sqrt{\omega}\cos\theta,\sqrt{\omega}\sin\theta )/\sqrt{2} \qquad \hat{f}\in L^2(\wR^2),\,\omega\in\wR_+,\,\theta\in \T.
\]
The unitarily equivalent representation $(J\F)\pi(J\F)^{-1}$ acting on
$L^2(\wR_+\times \T)$ reads 
\begin{equation}
\label{eq:JF}
 (J\F)\pi(J\F)^{-1}(b,a,\phi)\hat{f}(\omega,\theta)=a^{1/2}e^{-2\pi i
   b \omega}\hat{f}(a\omega,\theta-\phi) \qquad \omega\in \wR_+,\, \theta\in\T  
\end{equation}
for all $(b,a,\phi)\in G$ and $\hat{f}\in L^2(\wR_+\times \T)$.
The action on the radial variable can be described by the representation of $\R\rtimes \R_+$ on $L^2(\wR_+)$ given by
\begin{equation}
\label{eq:W+}
 \wh{W}^+ (b,a)g(\omega)=a^{1/2}e^{-2\pi i
   b\omega}g(a\omega) \quad \omega\in \wR_+,\,(b,a)\in \R\rtimes\R_+ ,\,
 g \in L^2(\wR_+) , 
\end{equation}
which is nothing else than the one-dimensional wavelet representation
in the positive frequency domain. The action on the angular variable is simply given by a rotation $\rho(\phi)z(\theta)=z(\theta-\phi)$ for $z\in L^2(\T)$. Therefore, the action of $\pi$ on two-dimensional functions  should be thought of as a classical one-dimensional wavelet representation on the radial component combined with rotations around the origin.

Consider now the Fourier series with respect to $\theta$ and define
the unitary operator $S\colon L^2(\R^2)\to \bigoplus_{n\in\Z} L^2 (\wR_+)$
by 
\[
(Sf)_n(\omega)= \int_0^{2\pi} (J\F f)(\omega,\theta) e^{-in\theta}
\dfrac{\mathrm{d}\theta}{\sqrt{2\pi}}\qquad
\omega\in\wR_+,\,n\in\Z,\, f\in L^2(\R^2).
% (J\F f)(\omega,\theta)=\sum_{n\in\Z} (Sf)_n (\omega)
% e^{in\theta}\qquad f\in L^2(\R^2) 
\]
From now on, we shall consider the equivalent representation
$\pi'=S\pi S^{-1}$ of $G$ acting on $\bigoplus_{n\in\Z} L^2
(\wR_+)$. In view of \eqref{eq:JF} and \eqref{eq:W+}, the action of
$\pi'$ is given by 
\[
 (\pi'(b,a,\phi)\hat{f})_n=e^{-in\phi}\,(\wh{W}^+
 (b,a)\hat{f}_n)\qquad n\in\Z,\, \hat{f}\in \bigoplus_{n\in\Z} L^2
 (\wR_+),\,(b,a,\phi)\in G.
\]
Denote the character  $\phi\mapsto e^{-in\phi}$ of $\T$ by $\rho_n$;
the representation $\pi'$ can be decomposed as
\begin{equation*}
  \pi'= \bigoplus_{n\in\Z}  \rho_n\wh{W}^+
\end{equation*}
where each component   $\rho_n\wh{W}^+$ acts irreducibly on $L^2
(\wR_+)$. 

It was proven in \cite{reproducingSPII,dahlke2014coorbit} that $\pi'$, and therefore $\pi$, is reproducing, namely
\begin{equation}
\label{eq:reproducing 2D}
 \nor{\hat{f}}_{\bigoplus_{n} L^2 (\wR_+)}^2=\int_G
 |\scal{\pi'(b,a,\phi)\widehat{\eta}}{\hat{f}}|^2\,\mathrm{d} b\frac{\mathrm{d} a}{a^2}\frac{\mathrm{d}\phi}{2\pi}\qquad
 \hat{f}\in \bigoplus_{n\in\Z} L^2 (\wR_+) 
\end{equation}
for some admissible vector $\widehat{\eta}\in \bigoplus_{n\in\Z} L^2 (\wR_+)$. A vector $\widehat{\eta}=(\widehat{\eta}_n)_n\in\bigoplus_{n\in\Z} L^2 (\wR_+)$ is admissible for $\pi'$ if and
only if  
\begin{equation}
  \label{eq:admissible 2D}
  \int_0^{+\infty} \abs{\widehat{\eta}_n(\omega)}^2\frac{\di\omega }{\omega} 
= 1\qquad n\in\Z,
\end{equation}
namely, if and only if each component $\widehat{\eta}_n$ is a
one-dimensional  wavelet \cite{daubechies1992}. A simple way to
construct admissible vectors in $\bigoplus_{n\in\Z} L^2 (\wR_+)$
satisfying \eqref{eq:admissible 2D} is to fix a one-dimensional
wavelet $\widehat{\eta}_0\in L^2 (\wR_+)$ satisfying
\eqref{eq:admissible 2D} and then construct all the other components
$\widehat{\eta}_n$ by dilating $\widehat{\eta}_0$. Since
\eqref{eq:admissible 2D} is invariant under positive dilations, it is
immediately satisfied for all $n$. More precisely, set for all $n\in\Z$
\begin{equation}
\label{eq:eta_n}
\widehat{\eta}_n(\omega)=\widehat{\eta}_0(\alpha_n^{-1} \omega)\qquad\omega\in\wR_+,
\end{equation}
for some weights $\alpha_n>0$ that satisfy $\alpha_0=1$ and $\sum_n
\alpha_n <\infty$. This last condition ensures that the resulting
$\widehat{\eta}$ has finite norm in $\bigoplus_{n\in\Z} L^2 (\wR_+)$,
because 
\[
\nor{\widehat{\eta}}^2=\nor{\widehat{\eta}_0}_{L^2 (\wR_+)}^2\sum_n
\alpha_n.
\]
\subsection{The discrete Schr\"odingerlets in 2D}
We now show how to construct a Parseval frame of $\bigoplus_{n\in\Z} L^2 (\wR_+)$ associated to $\pi'$. Then, by means of the intertwining operator $S$, this frame can be transformed into a Parseval frame of $L^2(\R^2)$ associated to $\pi$. Constructing a Parseval frame corresponds to a discretization of \eqref{eq:reproducing 2D} of the form
\begin{equation*}
\label{eq:frame 2D}
 \nor{\hat{f}}_{\oplus_{n} L^2 (\wR_+)}^2=\sum_{i\in\N} |\scal{\pi'(x_i)\widehat{\eta}}{\hat{f}}|^2\qquad \hat{f}\in \bigoplus_{n\in\Z} L^2 (\wR_+),
\end{equation*}
for suitable choices of the admissible vector $\widehat{\eta}$ and of a sampling $\{x_i\}_{i\in\N}$ of the group $G$.

Our approach is based on the fact that $\pi'$ is the direct sum of wavelet representations {acting on 1D signals}. Thus, it is instructive to look first at the well known one-dimensional case, namely at the representation $\wh{W}^+$ acting on  $L^2 (\wR_+)$. Standard wavelet theory \cite[Thm.~1.1,~Chapter~7]{herweis96} gives that $\{\wh{W}^+ (2^j k,2^j)\,\widehat{\eta}_0:k,j\in\Z\}$ is a Parseval frame for $L^2 (\wR_+)$, namely
\begin{equation*}
\label{eq:frame 1D}
 \nor{\hat{f}}_{L^2 (\wR_+)}^2=\sum_{k,j\in\Z} |\scal{\wh{W}^+ (2^j k,2^j)\,\widehat{\eta}_0}{\hat{f}}|^2\qquad \hat{f}\in L^2 (\wR_+),
\end{equation*}
provided that the conditions
 \begin{subequations}
  \label{eq:assuM1D}\begin{align}  
  &\sum_{j\in\Z}|\widehat{\eta}_0(2^j \omega)|^2 =1,\quad\text{for a.e. $\omega\in\wR_+$,}\\
  & \sum_{j\in\N} \widehat{\eta}_0 (2^j\omega)\overline{\widehat{\eta}_0(2^j(\omega+2\pi m))}=0,\quad\text{for a.e. $\omega\in\wR_+$, $m\in 2\Z+1$}
  \end{align}
 \end{subequations}
hold true. Note that in this case the sampling of the group $\R\rtimes \R_+$ is the discrete set $\{(2^j k,2^j):k,j\in\Z\}$. 

We now generalize this construction to the Schr\"odingerlets. In view
of the above sampling of the affine group, it is natural to consider
the discretization of $G$ given by 
\[
 \{x_{k,j,l}=(2^j k,2^j,2\pi l/L):k,j\in\Z, l=0,\dots,L-1\},
\]
{for some $L\in \Z_+$, where $ \Z_+ := \{1,2,\dots\} $ is the set of positive integers}. Note that the
angles $\phi_l=2\pi l/L$ give a  uniform sampling of $\T$ and form a
finite cyclic subgroup of order $L$. {Let us now discuss suitable
assumptions on the admissible vector $\widehat{\eta}$
so that $\{\pi'(x_{k,j,l})\,\widehat{\eta}:k,j\in\Z, l=0,\dots,L-1\}$ is a
Parseval frame for $\bigoplus_{n\in\Z} L^2 (\wR_+)$. 
% In particular, if 
% $\widehat{\eta}_n$ is given by \eqref{eq:eta_n}, 
% this requirement translates
% into a simple condition on $\widehat{\eta}_0$ and the weights $\alpha_n$.
}

We first observe that for every $n\in\Z$ it is necessary that each
 $\widehat{\eta}_n\in L^2(\wR^+)$ give rise to a Parseval frame for
 the corresponding space $ L^2(\wR_+)$, {\em i.e}. that each $\widehat{\eta}_n$ satisfies
\eqref{eq:assuM1D} (suitably normalized): 
 \begin{subequations}
 \label{eq:assuM2D-1} \begin{align}
  \label{eq:assuM2D-1-a}
  &\sum_{j\in\Z}|\widehat{\eta}_n(2^j \omega)|^2 =1/L,\quad\text{a.e. $\omega\in\wR_+$, $n\in\Z$},\\
  \label{eq:assuM2D-1-b}
  & \sum_{j\in\N} \widehat{\eta}_n (2^j\omega)\overline{\widehat{\eta}_n(2^j(\omega+2\pi m))}=0,\quad\text{a.e. $\omega\in\wR_+$, $n\in\Z$, $m\in 2\Z+1$.}  
  \end{align}
 \end{subequations}
In the continuous setting, it is necessary and sufficient to assume that each $\widehat{\eta}_n$ is a one-dimensional  wavelet, {\em i.e.}~that \eqref{eq:admissible 2D} holds true for every $n$, in order to have the continuous reproducing formula \eqref{eq:reproducing 2D}. In the discrete case, however, assumptions \eqref{eq:assuM2D-1} are not sufficient, and {we} assume the following conditions {to hold true}:
\begin{subequations}
 \label{eq:assuM2D-2} 
\begin{align}
 \label{eq:assuM2D-2-a}
  &\sum_{j\in\Z}\widehat{\eta}_n(2^j \omega)\overline{\widehat{\eta}_{n+kL}(2^j \omega)} =0,\quad\text{a.e. $\omega\in\wR_+$, $n\in\Z$, $k\in\Z^*$,}\\
  \label{eq:assuM2D-2-b}
  & \sum_{j\in\N}\widehat{\eta}_n(2^j \omega)\overline{\widehat{\eta}_{n+kL}(2^j (\omega+2\pi m))} =0,\,\,\text{a.e. $\omega\in\wR_+$, $n\in\Z$, $k\in\Z^*$, $m\in 2\Z+1$,}  
  \end{align}
 \end{subequations}
where $\Z^*=\Z\setminus\{0\}$.
{When $\eta_n$ is given by \eqref{eq:eta_n}, the above expressions can be
simplified into conditions involving $\widehat{\eta}_0$ and the
weights $\alpha_n$. }

%\begin{remark}\label{rem:schur-2D}
These orthogonality relations do not contain all the cross terms
between $\widehat{\eta}_n$ and $\widehat{\eta}_m$ for $n\neq m$, but only those
corresponding to the cases when $m-n\in L\Z$. The reason for this
simplification can be explained as follows. Two characters $\rho_n$
and $\rho_m$ restricted to the finite subgroup $\{2\pi l/L:
l=0,\dots,L-1\}$ are equivalent if and only if $m-n\in L\Z$. As a
consequence,  all the cross terms corresponding to $m$ and $n$ for
which $m-n\notin L\Z$ are zero by  Schur orthogonality relations for
finite groups.
%\end{remark}

The following theorem shows that the above conditions are
sufficient {in order to obtain a Parseval frame}.
 \begin{theorem}\label{thm:2D}
Let $\widehat{\eta}\in \bigoplus_{n\in\Z} L^2 (\wR_+)$ be such that \eqref{eq:assuM2D-1} and \eqref{eq:assuM2D-2} hold true, and take $L\in\Z_+$. Then $\{\pi'(x_{k,j,l})\,\widehat{\eta}:k,j\in\Z, l=0,\dots,L-1\}$ is a Parseval frame for $\bigoplus_{n\in\Z} L^2 (\wR_+)$, namely
\begin{equation*}
\label{eq:frame 2D}
 \nor{\hat{f}}_{\oplus_{n} L^2 (\wR_+)}^2=\sum_{k,j,l} |\scal{\pi'(x_{k,j,l})\widehat{\eta}}{\hat{f}}|^2\qquad \hat{f}\in \bigoplus_{n\in\Z} L^2 (\wR_+).
\end{equation*}
 \end{theorem}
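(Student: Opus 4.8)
The plan is to expand the frame sum componentwise and reduce it to the one-dimensional wavelet frame inequality plus controlled cross terms. Write $\hat f=(\hat f_n)_{n\in\Z}$, and for a group element $x_{k,j,l}=(2^jk,2^j,2\pi l/L)$ recall that $(\pi'(x_{k,j,l})\widehat\eta)_n=e^{-2\pi i n l/L}\,\wh W^+(2^jk,2^j)\widehat\eta_n$. Hence
\[
\scal{\pi'(x_{k,j,l})\widehat\eta}{\hat f}
=\sum_{n\in\Z} e^{2\pi i n l/L}\,\overline{\scal{\wh W^+(2^jk,2^j)\widehat\eta_n}{\hat f_n}}^{\,*},
\]
more precisely $\scal{\pi'(x_{k,j,l})\widehat\eta}{\hat f}=\sum_n e^{2\pi i n l /L}\,c_{k,j}^n$, where $c_{k,j}^n:=\scal{\wh W^+(2^jk,2^j)\widehat\eta_n}{\hat f_n}_{L^2(\wR_+)}$. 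Squaring the modulus and summing over $l=0,\dots,L-1$ first, the finite sum $\sum_{l=0}^{L-1}e^{2\pi i(n-m)l/L}$ equals $L$ if $n\equiv m\pmod L$ and $0$ otherwise; this is exactly the Schur orthogonality for the cyclic group $\Z/L\Z$ mentioned after the statement. Therefore
\[
\sum_{l=0}^{L-1}\bigl|\scal{\pi'(x_{k,j,l})\widehat\eta}{\hat f}\bigr|^2
= L\sum_{\substack{n,m\in\Z\\ m-n\in L\Z}} c_{k,j}^n\,\overline{c_{k,j}^m}
= L\sum_{n\in\Z}\sum_{r\in\Z} c_{k,j}^n\,\overline{c_{k,j}^{\,n+rL}}.
\]

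Next I would sum over $k,j\in\Z$. The diagonal term $r=0$ contributes $L\sum_{n}\sum_{k,j}|c_{k,j}^n|^2$; by the one-dimensional wavelet frame theorem \cite[Thm.~1.1, Chapter~7]{herweis96}, applied with the normalization \eqref{eq:assuM2D-1} (which guarantees exactly $\sum_{k,j}|c_{k,j}^n|^2=\tfrac1L\|\hat f_n\|^2_{L^2(\wR_+)}$ for each fixed $n$), this equals $\sum_n\|\hat f_n\|^2=\|\hat f\|^2_{\oplus_n L^2(\wR_+)}$, which is the desired right-hand side. So it remains to show that all off-diagonal terms $r\neq 0$ vanish, i.e.
\[
\sum_{k,j\in\Z} c_{k,j}^n\,\overline{c_{k,j}^{\,n+rL}}=0\qquad\text{for every }n\in\Z,\ r\in\Z^*.
\]
This is where conditions \eqref{eq:assuM2D-2} enter. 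The idea is to expand $c_{k,j}^n\overline{c_{k,j}^{n+rL}}$ as a double integral over $\wR_+\times\wR_+$ against $\overline{\hat f_n(\omega)}\hat f_{n+rL}(\omega')$, carry out the sum over $k$ (which, after passing to the Fourier domain of the translation variable via the standard Plancherel/Poisson argument used in proving the 1D result, forces $\omega'=\omega+2\pi m$ with $m\in\Z$ and splits into $m=0$ and $m$ odd — the even nonzero $m$ being killed by the dyadic structure), and then the sum over $j$ produces precisely the expressions $\sum_{j\in\Z}\widehat\eta_n(2^j\omega)\overline{\widehat\eta_{n+rL}(2^j\omega)}$ for the $m=0$ part and $\sum_{j\ge0}\widehat\eta_n(2^j\omega)\overline{\widehat\eta_{n+rL}(2^j(\omega+2\pi m))}$ for the odd-$m$ part. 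By \eqref{eq:assuM2D-2-a} and \eqref{eq:assuM2D-2-b} these vanish a.e., so each off-diagonal term is zero.

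\textbf{Main obstacle.} The genuinely delicate point is the manipulation of the sum over the translation parameter $k$ for the \emph{cross} terms $c_{k,j}^n\overline{c_{k,j}^{n+rL}}$: one must justify the interchange of summation and integration and correctly reproduce the Plancherel-on-$\Z$ argument that underlies the classical proof of \cite[Thm.~1.1, Chapter~7]{herweis96}, but now with two different wavelets $\widehat\eta_n$ and $\widehat\eta_{n+rL}$ in the two slots rather than one. The cleanest route is to isolate this as the ``slight generalization'' of the one-dimensional result alluded to in the introduction, namely a bilinear version: for $g,h\in L^2(\wR_+)$ and two functions $\widehat\eta_n,\widehat\eta_{n'}$ satisfying the relevant pieces of \eqref{eq:assuM2D-1}--\eqref{eq:assuM2D-2},
\[
\sum_{k,j\in\Z}\scal{\wh W^+(2^jk,2^j)\widehat\eta_n}{g}\,\overline{\scal{\wh W^+(2^jk,2^j)\widehat\eta_{n'}}{h}}
=\begin{cases}\tfrac1L\scal{g}{h}&n'=n,\\[2pt]0&n'-n\in L\Z^*.\end{cases}
\]
Granting this bilinear identity (whose proof is the standard one, tracking two wavelets through the computation), the theorem follows by combining it with the Schur-orthogonality reduction above: the $n'=n$ case gives the frame norm and the $n'\neq n$ case kills the cross terms. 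A minor technical caveat is ensuring the conditions \eqref{eq:assuM2D-1}--\eqref{eq:assuM2D-2} are enough to make all the rearrangements absolutely convergent; this is handled exactly as in \cite[Chapter~7]{herweis96}, using that $\hat f$ may first be taken in a suitable dense subspace (e.g.\ with finitely many nonzero components, each compactly supported away from $0$ and $\infty$) and then passing to the limit by density and the frame bound already established for the diagonal part.
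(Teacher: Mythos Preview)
Your proposal is correct and follows essentially the same approach as the paper. The paper packages the argument via three abstract lemmas---Lemma~\ref{lem:3} (Parseval frame from a total set), Lemma~\ref{lem:4} (reducing to diagonal and off-diagonal terms in a direct sum), and Lemma~3.7 (the bilinear 1D wavelet identity, obtained from Lemma~1.18 of \cite{herweis96} by polarization)---and then applies Schur orthogonality for the finite subgroup; you carry out the same computation directly in the 2D case, first summing over $l$ to get Schur orthogonality for $\Z/L\Z$, then invoking the same bilinear 1D identity for the cross terms, with the density reduction playing the role of the paper's Lemma~\ref{lem:3}.
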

The proof is given in Section~\ref{sec:proof} as part of the proof of Theorem~\ref{thm:main}.
Here we just  exhibit  functions $\widehat{\eta}$ satisfying the assumptions. 
% \begin{remark}
Take $\widehat{\eta}_0\in L^2 (\wR_+) $ such that \eqref{eq:assuM2D-1-a} is satisfied for $n=0$ and such that ${\rm supp\,}\widehat{\eta}_0\subseteq [0,2\pi]$. Moreover, choose weights $\alpha_n\in (0,1]$ such that $\alpha_0=1$, $\sum_n \alpha_n <\infty$ and
\begin{equation}
  \label{eq:disjoint-supports-2D}
  |\supp{\widehat{\eta}_0}\cap \alpha_{n}^{-1}
  \alpha_{n+kL}\supp{\widehat{\eta}_0}|= 0 \qquad n\in\Z, k\in\Z^*,
\end{equation}
where $ |\cdot|$ denotes Lebesgue measure.
It is easy to see that the admissible vector  $\widehat{\eta}\in \bigoplus_{n\in\Z} L^2 (\wR_+)$ defined by \eqref{eq:eta_n} satisfies \eqref{eq:assuM2D-1} and \eqref{eq:assuM2D-2}. A simple choice valid for any $L$ is 
$\widehat{\eta}_0=L^{-1}\chi_{[1/2,1]}$ and
\[
 \alpha_n=\begin{cases}
           2^{-2n} & \text{if $n\ge 0$}\\
           2^{2n+1} & \text{if $n< 0$.}
          \end{cases}
\]

We now comment on the role of the number of rotations $L$.
The conclusion of Theorem~\ref{thm:2D} still holds true when $L=1$, namely when no rotations are considered. However, the rotations do play a role in the choice of the admissible vector $\widehat{\eta}$. Indeed, condition \eqref{eq:assuM2D-2}, or \eqref{eq:disjoint-supports-2D} in the case when \eqref{eq:eta_n} holds true, becomes weaker as $L$ increases. More precisely, if $L_2$ is a multiple of $L_1$ and $\widehat{\eta}$ satisfies \eqref{eq:assuM2D-2} with $L=L_1$, then the same equalities hold true with $L=L_2$. Note that this is equivalent to saying that the two  corresponding  discrete subgroups of $\T$ are one contained into the other.

Note that for $L=1$ a simple computation shows that $\nor{\widehat{\eta}}=1$, hence
the frame obtained in Theorem~\ref{thm:2D} is in fact an orthonormal
basis of $\bigoplus_{n\in\Z} L^2 (\wR_+)$ . Indeed, it is a standard  general fact that  a tight frame whose elements have norm (greater than or equal to) one is necessarily an orthonormal basis (see e.g.\
 \cite[Theorem~1.8, Ch. 7]{herweis96}).

\section{The $d$-dimensional case}\label{sec:dD}
\subsection{The continuous setting}\label{continuous}
We define $G=(\R\rtimes \R_+)\times \SO$ as the direct product of the identity
component of the one-dimensional affine group and $\SO$.   Clearly,
the set
\[ H=\set{(0,a,R) \mid a\in\R_{+},\, R\in\SO}\simeq \R_+\times \SO\]
is a closed unimodular subgroup of $G$ and its Haar measure 
is $\di h= a^{-1}\di a\di R$,  and the set
\[ 
\set{(b,1,\mathrm{I})\mid b\in\R}\simeq \R
\]
is a normal abelian closed subgroup of $G$, whose  Haar measure is the
Lebesgue measure~$\di b$. Moreover, $G$ is the
semi-direct product of $\R$ and $H$ with respect to the inner action of
$H$ on $\R$ given by
\[ h[b]=ab \qquad b\in\R,\ h=(a,R)\in H.\]
We set
\begin{equation}
\gamma(h)=\det{\left(b\mapsto h[b]\right)}=a.\label{eq:24}
\end{equation}

The Schr\"odinger representation $\pi$ of $G$ acts on $L^2(\R^d)$ as
\begin{subequations}
  \begin{equation}
    \label{eq:1a}
    \pi(b,a,R) = U(b) V(a,R) \qquad (b,a,R)\in G.
  \end{equation} 
Here $V(a,R)$ is the unitary operator 
\[
V(a,R) f (x)= a^{-\frac{d}{4}} f(a^{-\frac{1}{2}} R^{-1}x) \qquad f\in L^2(\R^d),\,x\in\R^d,
\]
and $b\mapsto U(b)$  is the one-parameter
group  of unitary operators on $L^2(\R^d)$ associated with the Laplacian by  the spectral calculus, namely
  \begin{equation} 
    U(b) = e^{i \frac{b}{2\pi} \Delta}.\label{eq:2a}
  \end{equation}
Thus
  \begin{equation}
    \label{eq:2b}
  \F U(b) \F^{-1} \hat{f}(\xi) =  e^{-2\pi i b \,\xi\cdot\xi}
  \hat{f}(\xi) \qquad \xi\in\wR^d.
  \end{equation}
Setting $\wh{\pi}=\F\pi\F^{-1}$ we get
  \begin{equation}
    \label{pihat}
   \wh{\pi}(b,a,R)\hat{f}(\xi) = a^{\frac{d}{4}} e^{-2\pi i
     b\,\xi\cdot\xi} \hat{f}(a^{\frac{1}{2}} R^{-1}\xi) \qquad
   \hat{f}\in L^2(\wR^d),\,\xi\in\wR^d.
  \end{equation}
\end{subequations}

We now prove that $\pi$ is a reproducing representation.
\begin{proposition}\label{prop:repr}
 The  Schr\"odinger representation $\pi$ of $G$ is a reproducing representation.
\end{proposition}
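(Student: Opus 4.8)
The plan is to mimic the two-dimensional construction of Section~\ref{sec:2D} and then reduce the statement to the general theory of (direct sums of) square-integrable representations of \cite{dede13}. Concretely, I would (i) pass to the Fourier domain, (ii) diagonalise $\wh\pi$ by polar coordinates together with a substitution in the radial variable, (iii) decompose the angular part into spherical harmonics so that $\pi$ becomes a countable direct sum of irreducible square-integrable representations, and (iv) build a global admissible vector by the dilation trick already used in \eqref{eq:eta_n}.

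\emph{Step (i)--(ii).} Work with $\wh\pi=\F\pi\F^{-1}$, whose action is \eqref{pihat}. Writing $\xi=r\sigma$ with $r=\abs{\xi}>0$, $\sigma\in\SP$, and substituting the radial variable $\omega=r^2$, one gets a unitary identification $L^2(\wR^d)\simeq L^2(\wR_+,\di\omega)\otimes L^2(\SP)$ (the Jacobian $r^{d-1}\di r=\tfrac12\,\omega^{(d-2)/2}\di\omega$ is absorbed by the radial conjugation $g\mapsto\omega^{(d-2)/4}g$). Under this intertwiner $\wh\pi(b,a,R)$ reads $\hat f(\omega,\sigma)\mapsto a^{1/2}e^{-2\pi i b\omega}\hat f(a\omega,R^{-1}\sigma)$, so that $\pi$ is unitarily equivalent to $\wh W^+\otimes\lambda$, where $\wh W^+$ is the one-dimensional wavelet representation \eqref{eq:W+} on $L^2(\wR_+)$ and $\lambda$ is the quasi-regular representation of $\SO$ on $L^2(\SP)$, $\lambda(R)Y(\sigma)=Y(R^{-1}\sigma)$.

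\emph{Step (iii).} Decompose $L^2(\SP)=\bigoplus_{\ell\ge0}\mathcal{H}_\ell$ into spaces of spherical harmonics of degree $\ell$: each $\mathcal{H}_\ell$ is $\lambda$-invariant and carries an irreducible representation $\sigma_\ell$ of $\SO$ of dimension $d_\ell:=\dim(\mathcal{H}_\ell)<\infty$, and the $\sigma_\ell$ are pairwise inequivalent. Hence $\pi\simeq\bigoplus_{\ell\ge0}\pi_\ell$ with $\pi_\ell:=\wh W^+\otimes\sigma_\ell$ an irreducible representation of $G=(\R\rtimes\R_+)\times\SO$. Each $\pi_\ell$ is square-integrable, being the exterior tensor product of the (square-integrable) wavelet representation of $\R\rtimes\R_+$ with a finite-dimensional representation of the compact group $\SO$; its Duflo--Moore operator, relative to the measures $\di b\,\di a/a^2$ on $\R\rtimes\R_+$ and $\di R$ on $\SO$, is $C\otimes\kappa_\ell\,\mathrm{Id}$, where $C$ is the wavelet operator (multiplication by $\omega^{-1/2}$ in the frequency variable, cf.\ \eqref{eq:admissible 2D}) and $\kappa_\ell>0$ is proportional to $d_\ell^{-1/2}$. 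Thus $\eta_\ell\in L^2(\wR_+)\otimes\mathcal{H}_\ell$ is admissible (Parseval) for $\pi_\ell$ if and only if $\nor{(C\otimes\kappa_\ell\mathrm{Id})\eta_\ell}=1$.

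\emph{Step (iv) and the main obstacle.} Fix a one-dimensional wavelet $\psi_0\in L^2(\wR_+)$ with $\int_0^{+\infty}\abs{\psi_0(\omega)}^2\,\di\omega/\omega=1$, unit vectors $e_\ell\in\mathcal{H}_\ell$, and weights $\alpha_\ell>0$ with $\sum_{\ell\ge0}d_\ell\alpha_\ell<\infty$ (e.g.\ $\alpha_\ell=d_\ell^{-1}2^{-\ell}$), and set $\eta_\ell(\omega,\sigma):=\kappa_\ell^{-1}\psi_0(\alpha_\ell^{-1}\omega)\,e_\ell(\sigma)$. By dilation invariance of the Calder\'on integral each $\eta_\ell$ is admissible for $\pi_\ell$, while $\sum_\ell\nor{\eta_\ell}^2=\nor{\psi_0}_{L^2(\wR_+)}^2\sum_\ell\kappa_\ell^{-2}\alpha_\ell<\infty$, so $\eta:=(\eta_\ell)_\ell$ is a genuine vector. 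Since the $\pi_\ell$ are pairwise disjoint, the orthogonality relations for disjoint square-integrable representations give, for $f=(f_\ell)_\ell$ with only finitely many nonzero components, $\int_G\abs{\scal{\pi(x)\eta}{f}}^2\,\di x=\sum_\ell\int_G\abs{\scal{\pi_\ell(x)\eta_\ell}{f_\ell}}^2\,\di x=\sum_\ell\nor{f_\ell}^2=\nor{f}^2$, and one extends this to all $f$ by density, using that $\sum_\ell\scal{\pi_\ell(x)\eta_\ell}{f_\ell}$ converges both in $L^2(G)$ and uniformly in $x$ (Cauchy--Schwarz plus $\sum_\ell\nor{\eta_\ell}^2<\infty$). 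The crux is exactly this last passage to the full countable direct sum: the per-component reproducing formulas are immediate from Duflo--Moore and disjointness, but the global isometry of the voice transform is the delicate point handled by the machinery of \cite{dede13}, and it is here that the weights $\alpha_\ell$ are indispensable --- since $\sum_\ell d_\ell=\infty$, the normalisation forced by admissibility would make $\sum_\ell\nor{\eta_\ell}^2$ diverge without them, so that $\eta$ would fail to lie in $L^2(\wR^d)$; this is the higher-dimensional counterpart of the condition $\sum_n\alpha_n<\infty$ appearing in \eqref{eq:eta_n}.
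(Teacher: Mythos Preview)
Your argument is correct, but the paper takes a different and much shorter route. The paper's proof of Proposition~\ref{prop:repr} is purely structural: it recognises $\wh\pi$ as a \emph{mock-metaplectic representation} in the sense of \cite{dede13}, exhibiting the map $\Phi(\xi)=\xi\cdot\xi$ of~\eqref{eq:Phi} and checking the short list of geometric hypotheses (smoothness, negligible critical set, $H$-equivariance, transitivity of the $H$-action on $\wR_+$ with compact stabiliser $\SO$). Theorem~9 of \cite{dede13} then yields the reproducing property directly, with no explicit admissible vector ever written down. Your approach is instead constructive: you first carry out the spherical-harmonic decomposition $\pi\simeq\bigoplus_\ell \wh W^+\otimes\sigma_\ell$ (which the paper postpones to Proposition~\ref{prop:admis}), identify the Duflo--Moore operator on each factor, and then manufacture a global admissible vector via the weighted-dilation trick (which the paper carries out separately in Section~\ref{family}). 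In effect you merge the content of Propositions~\ref{prop:repr} and~\ref{prop:admis} and of Section~\ref{family} into a single argument. The paper's route is quicker because the analysis is encapsulated in the cited theorem; yours is more self-contained, needing only Schur/Duflo--Moore orthogonality and the density argument you sketch, and it delivers an explicit admissible vector as a byproduct. One small caveat: your claim that the $\sigma_\ell$ are irreducible and pairwise inequivalent holds only for $d\ge3$; for $d=2$ each $\mathcal H_\ell$ with $\ell\ge1$ splits into two one-dimensional pieces and one must pass to the $\Z$-indexed decomposition of Section~\ref{sec:2D}, though the rest of your argument goes through unchanged.
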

\begin{proof}
It is enough to prove the result for $\wh{\pi}$, which belongs to the family
of { representations introduced} in
\cite{dede13}, regarding $G$ as semi-direct product of $\R$ and $H$. 
Indeed, $H$ acts on the dual group $\widehat{\R}$ of $\R$ by the
contra-gradient action
\[\ts h[\omega]=a^{-1}\omega\qquad\omega\in\wR ,\,  h=(a,R) \in H.\]
The group $H$ acts on $\R^d$ as well as on the dual space $\wR^d$ by
means of
\[
\begin{array}{rcl}
  h.x & = & a^{\frac12} R x \\
   \ts h. \xi & = & a^{-\frac12} R \xi
\end{array}\qquad  x\in\R^d,\, \xi\in\wR^d,\, h=(a,R)\in H.
\]
We set
\[ \beta(h)=\det{\left(\xi\mapsto \ts h.\xi\right)}=a^{-\frac{d}{2}}.\]
The map 
\begin{equation}
\label{eq:Phi}
\Phi:\wR^d\longrightarrow \wR, \qquad \Phi(\xi)={\xi\cdot\xi}
\end{equation}
is easily seen to
satisfy the following properties:
\begin{enumerate}[i)]
\item $\Phi$ is a smooth map whose gradient is $\nabla\Phi(\xi) = 2\xi$;
\item the set of critical points of $\Phi$ reduces to the origin,
  which is a Lebesgue negligible set, and $\Phi(\wR^d\setminus\set{0})=\wR_+$;
\item $\Phi(\ts h.\xi)=\ts h[\Phi(\xi)]$ for all $\xi\in\wR^d$ and $h\in
  H$;
\item the action of $H$ on $\wR_+$ is transitive, the
  stability subgroup at $1\in\R_+$ is the compact group $\SO$, and $q:(0,+\infty)\to H$,
  $q(\omega)=\omega^{-1}$, is a smooth section, namely
\[ \ts q(\omega)[1]=\omega \qquad \omega\in \R_+;\]
\item $\Phi^{-1}(1)=\SP$, where  $\SP$ is the unit sphere of $\wR^d$ endowed
  with the Riemannian measure $\mathrm{d}s$.
\end{enumerate}
From \eqref{pihat} it is clear that
\begin{subequations}
  \begin{align}
    \label{eq:14a}
    \wh{\pi}(b,h)\hat{f}(\xi)  = \beta(h)^{-\frac12}\, e^{-2\pi i b\Phi(\xi) }\, 
                         \hat{f}(\ts h^{-1}.\xi) ,
\end{align}
\end{subequations}
where $ \xi\in\wR^d$, $\hat{f}\in L^2(\wR^d)$ and $(b,R)\in \R\rtimes(\R_+\times\SO)$, which
shows that $\wh{\pi}$  is the mock-metaplectic representation
associated with the map $\Phi$ (see \cite{dede13}).
Theorem~9 of \cite{dede13} then implies that $\wh{\pi}$ is a reproducing representation.
\end{proof}
{It is worth noting that the above proof works also in the case when $G$ is simply given by $\R\rtimes\R_+$, namely in absence of rotations. Indeed, the only difference is in the stability subgroup at $1$, which  would  be  trivial in this case. Therefore, the corresponding representation is reproducing also without considering rotations.}

We now study the admissible vectors of $\pi$. 
First, we need to recall some elementary facts.

Let $\rho$ be the
regular representation of $\SO$ acting on  $L^2(\SP)$, namely
\[
\rho(R)\varphi(s)=\varphi(R^{-1}s)\qquad s\in\SP, \varphi\in
L^2(\SP), R\in\SO.
\]
There holds that
\begin{equation}
  \label{eq:14}
  L^2(\SP)= \bigoplus_{i\in\N} \hh_i,
\end{equation}
where each $\hh_i$ is the space of spherical harmonics, namely the complex polynomials in $d$ variables,
homogeneous of degree $i$ and harmonic. Here each polynomial is
regarded as a function on  $\SP$, so that $\hh_i$ can be identified as a subspace of
$L^2(\SP)$. For an account of the role of spherical harmonics in the representation theory of the orthogonal groups see \cite{clerc82}.
{Let $ d_i := \dim{\hh_i} $. It is known that
\begin{equation}
  \label{eq:22}
  d_0 = 1,
  \quad d_1=d,
  \quad d_i=\binom{d+i-1}{d-1} - \binom{d+i-3}{d-1},\; i\geq 2.
\end{equation}}
%\begin{equation}
%  \label{eq:22}
%  \dim{\hh_0}=1,
%  \quad \dim{\hh_1}=d,
%  \quad \dim{\hh_i}=\binom{d+i-1}{d-1} - \binom{d+i-3}{d-1},\; i\geq 2.
%\end{equation}
Moreover,
\begin{equation}
  \label{eq:16}
  \rho= \bigoplus_{i\in\N} \rho_i,
\end{equation}
where $\rho_i$ is the restriction of $\rho$ to $\hh_i$.
We  denote by $P_i$ the projection from $L^2(\SP)$ onto $\hh_i$.

If $d>2$, each representation $\rho_i$ is  irreducible,
and two representations $\rho_i$ and $\rho_j$  are inequivalent whenever $i\neq j$ (the
multiplicity of each $\rho_i$ is one).
For $d=2$, every $\hh_i$ with $ i \ge 1 $ has dimension $2$
and each $\rho_i$ is the sum of two inequivalent irreducible one-dimensional representations,
namely $\rho_i^+(\theta)=e^{in\theta}$,
$\rho_i^-(\theta)=e^{-in\theta}$, $\theta\in
\operatorname{SO}(2)\simeq \T$. 
Hence, we still obtain a decomposition into inequivalent irreducible
representations if we just replace the index set $\N$ with $\Z$. 
{For simplicity, we shall proceed using the notation of the case $ d \ge 3$. Except for this minor notational difference, the two-dimensional case described in Section~\ref{sec:2D} is completely covered by the argument given below.}

Recall that the group $\R\rtimes\R_+$ has only two inequivalent
infinite dimensional irreducible representations up to  unitary
equivalence, which we denote by {$\wh{W}^+$ and $\wh{W}^-$} (see e.g. \cite{taylor86}). 
Each of them  acts on
{$L^2(\wR_{+})$ and $L^2(\wR_{-})$, respectively,} as  
\begin{equation}
 \wh{W}^\pm (b,a)\varphi(\omega)=a^{\frac{1}{2}} \varphi(a\omega) e^{-2\pi i
   b\omega} \qquad \omega\in\wR_{\pm},\,
(b,a)\in \R\rtimes\R_+\label{eq:7},
\end{equation}
where $\varphi\in L^2(\wR_{\pm})$.

Now, let $J:L^2(\wR^d) \to L^2( \wR_+ \times\SP)$ be the operator defined by
\begin{equation}
  \label{eq:5}
  J\hat{f}(\omega,s)= \frac{\omega^{\frac{d-2}{4}}}{\sqrt{2}}
  \hat{f}(\sqrt{\omega}\, s)
\qquad \omega\in \wR_+,\, s\in\SP,\,\hat{f}\in L^2(\wR^d).
\end{equation}
We have the following simple lemma.

\begin{lemma}\label{lem:JisUnitary}
The operator $J$ is unitary.  
\end{lemma}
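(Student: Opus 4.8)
The plan is to verify unitarity of $J\colon L^2(\wR^d)\to L^2(\wR_+\times\SP)$ directly, by checking that it preserves the $L^2$-norm (equivalently, inner products) and that it is surjective. Since $J$ is given by an explicit formula, the natural route is a change of variables to polar coordinates on $\wR^d$.

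First I would write the Lebesgue measure on $\wR^d$ in polar coordinates: for $\xi\in\wR^d\setminus\{0\}$ we set $\xi = r s$ with $r=\abs{\xi}>0$ and $s\in\SP$, and then $\di\xi = r^{d-1}\,\di r\,\di s$, where $\di s$ is the Riemannian surface measure on $\SP$. Hence for $\hat f\in L^2(\wR^d)$,
\[
\nor{\hat f}_{L^2(\wR^d)}^2 = \int_{\SP}\int_0^{+\infty} \abs{\hat f(rs)}^2 r^{d-1}\,\di r\,\di s.
\]
Next I would substitute $r=\sqrt{\omega}$, so that $\omega = r^2$, $\di\omega = 2r\,\di r$, i.e.\ $\di r = \di\omega/(2\sqrt\omega)$ and $r^{d-1} = \omega^{(d-1)/2}$. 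This turns the inner integral into
\[
\int_0^{+\infty}\abs{\hat f(\sqrt\omega\,s)}^2\,\omega^{\frac{d-1}{2}}\,\frac{\di\omega}{2\sqrt\omega} = \int_0^{+\infty}\abs{\hat f(\sqrt\omega\,s)}^2\,\frac{\omega^{\frac{d-2}{2}}}{2}\,\di\omega = \int_0^{+\infty} \abs[\Big]{\tfrac{\omega^{(d-2)/4}}{\sqrt 2}\hat f(\sqrt\omega\,s)}^2\,\di\omega,
\]
which is exactly $\int_0^{+\infty}\abs{J\hat f(\omega,s)}^2\,\di\omega$. Integrating over $s\in\SP$ and applying Tonelli (the integrand is non-negative) gives $\nor{J\hat f}_{L^2(\wR_+\times\SP)}^2 = \nor{\hat f}_{L^2(\wR^d)}^2$, so $J$ is a well-defined isometry; polarization then yields preservation of inner products.

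Finally I would check surjectivity. Given $g\in L^2(\wR_+\times\SP)$, define $\hat f$ on $\wR^d\setminus\{0\}$ by $\hat f(\xi) = \sqrt{2}\,\abs{\xi}^{-\frac{d-2}{2}} g(\abs{\xi}^2,\xi/\abs{\xi})$ (and $\hat f(0)=0$), i.e.\ invert the formula \eqref{eq:5}; the same change of variables run backwards shows $\hat f\in L^2(\wR^d)$ with $\nor{\hat f} = \nor{g}$ and $J\hat f = g$ a.e.\ Since $\{0\}$ is Lebesgue-negligible in $\wR^d$, the value at the origin is irrelevant. This establishes that $J$ is a surjective isometry, hence unitary. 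I do not expect any real obstacle here: the only point requiring a little care is keeping track of the exponent $\frac{d-2}{4}$ through the substitution $\omega=\abs{\xi}^2$ and confirming it matches the Jacobian factor $\frac{1}{2}\omega^{(d-2)/2}$, which it does by construction of $J$.
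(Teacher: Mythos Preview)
Your proof is correct and follows essentially the same route as the paper: the same polar change of variables $\xi=rs$ followed by $\omega=r^2$ to show that $J$ is an isometry, and then an explicit inverse formula to obtain surjectivity. Your exponent in the inverse, $\abs{\xi}^{-(d-2)/2}=(\xi\cdot\xi)^{-(d-2)/4}$, is in fact the correct one (the paper's displayed $(\xi\cdot\xi)^{d-2}$ is a typo).
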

\begin{proof}
If $\hat{f}\in L^2(\wR^d)$, then the changes of variable
$\omega=r^2$ and $\xi=r s$ yield
\begin{align}\label{eq:change}
 \int\limits_{\wR_+\times\SP} \omega^{\frac{d-2}{2}} 
                  \abs{\hat{f} (\sqrt{\omega}s)}^2 \frac{\di\omega\di s }{2}
= \int\limits_{\wR_+\times\SP}  r^{d-1} \abs{\hat{f} (r s)}^2 \di r \di s 
= \int\limits_{\wR^d} \abs{\hat{f} (\xi)}^2 \di\xi.
\end{align}
The inverse of $J$ is given by
\[
 (J^{-1} g)(\xi) = \frac{\sqrt{2}}{(\xi\cdot\xi)^{d-2}}g(\xi\cdot
 \xi, \frac{\xi}{\sqrt{\xi\cdot\xi}}) \qquad \xi\in\wR^d,\, \xi\neq 0 ,\,g\in L^2( \wR_+ \times\SP),
\]
which proves that $J$ is unitary.
\end{proof}
In what follows, we will freely identify
  \begin{align}  \label{identifications}
  L^2(\wR^d) &\simeq
L^2( \wR_+ \times \SP) \nonumber \\
&\simeq L^2( \wR_+)\otimes L^2(\SP)\nonumber\\
&\simeq  \bigoplus_{i\in\N}  L^2(\wR_+)\otimes \hh_i\\
& \simeq\bigoplus_{i\in\N}  L^2(\wR_+,\hh_i).\nonumber
\end{align}

We define the unitary operator $S: L^2(\R^d)\to \bigoplus_{i\in\N}L^2(\wR_+,\hh_i)$ by
\begin{equation}
  \label{eq:4}
  (S f)_i= (\operatorname{Id}\otimes P_i)   (J\F f) \qquad f\in L^2(\R^d) .
\end{equation}

\begin{proposition}\label{prop:admis}
With the above notation,
\begin{equation}
  \label{eq:6}
  S\pi S^{-1} = \bigoplus_{i\in\N}  \wh{W}^+ \otimes \rho_i
\end{equation}
where each component   $\wh{W}^+ \otimes \rho_i$ is irreducible and inequivalent to the others.
A vector ${\eta}\in L^2(\R^d)$ is  admissible for $\pi$ if and
only if  
\begin{equation}
  \label{eq:19b}
  \int_0^{+\infty} \nor{(S{\eta})_i(\omega)}^2_{\hh_i}\frac{\di\omega }{\omega} 
= d_i\qquad i\in\N .
\end{equation}
\end{proposition}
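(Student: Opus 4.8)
The plan is to transport the whole statement through the unitary $S$: since $\eta$ is admissible for $\pi$ exactly when $S\eta$ is admissible for $\pi':=S\pi S^{-1}$, it suffices to prove \eqref{eq:6} and then to characterize admissibility for the right-hand side. (I use the notation of the case $d\ge 3$, the case $d=2$ being identical after replacing $\N$ by $\Z$, so that all components become one-dimensional, as already observed.)

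For the decomposition I would first compute $\pi'$ explicitly. Conjugating $\pi$ by $\F$ gives $\wh\pi$ as in \eqref{pihat}; conjugating that by $J$ and using $(\sqrt\omega\,s)\cdot(\sqrt\omega\,s)=\omega$ together with $a^{1/2}R^{-1}\sqrt\omega\,s=\sqrt{a\omega}\,R^{-1}s$, a short calculation — the $d$-dimensional version of \eqref{eq:JF} — shows that $J\wh\pi(b,a,R)J^{-1}$ acts on $L^2(\wR_+\times\SP)$ by $g(\omega,s)\mapsto a^{1/2}e^{-2\pi i b\omega}g(a\omega,R^{-1}s)$, which under $L^2(\wR_+\times\SP)\simeq L^2(\wR_+)\otimes L^2(\SP)$ is precisely $\wh{W}^+(b,a)\otimes\rho(R)$. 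Since each $\hh_i$ is $\rho$-invariant, $\operatorname{Id}\otimes P_i$ commutes with $\wh{W}^+\otimes\rho$, and reassembling the summands via the identifications \eqref{identifications} yields $\pi'=\bigoplus_i\wh{W}^+\otimes\rho_i$. Write $\sigma_i:=\wh{W}^+\otimes\rho_i$ for the $i$-th summand. Irreducibility of $\sigma_i$ holds because $\wh{W}^+$ is irreducible and $\rho_i$ is a finite-dimensional irreducible — the commutant of $\sigma_i$ lies inside $\Cc\operatorname{Id}\otimes B(\hh_i)$ (the commutant of $\wh{W}^+\otimes\operatorname{Id}$) and, intersected with the commutant of $\operatorname{Id}\otimes\rho_i$, reduces to $\Cc\operatorname{Id}$. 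For $i\neq j$, any intertwiner of $\sigma_i$ and $\sigma_j$ would intertwine their restrictions to $\{(0,1,R)\}\simeq\SO$, namely $\operatorname{Id}\otimes\rho_i$ and $\operatorname{Id}\otimes\rho_j$; but the first is $\rho_i$-isotypic and the second $\rho_j$-isotypic with $\rho_i\not\simeq\rho_j$ (this is where one uses that the $\rho_i$ are pairwise inequivalent), so the intertwiner vanishes. This proves the first assertion.

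For admissibility, the heart of the matter is the computation on a single summand. Fix $i$, choose an orthonormal basis $e_1,\dots,e_{d_i}$ of $\hh_i$ and write $\eta_i:=(S\eta)_i=\sum_k g_k\otimes e_k$ and $f_i=\sum_l h_l\otimes e_l$ for a test vector $f_i$. Using the Haar measure $\di b\,\frac{\di a}{a^2}\,\di R$ of $G$ with $\di R$ normalized to total mass one (the normalization implicit in \eqref{eq:reproducing 2D}), I would integrate first over $\SO$: Schur's orthogonality relations give $\int_{\SO}\scal{e_l}{\rho_i(R)e_k}\,\overline{\scal{e_{l'}}{\rho_i(R)e_{k'}}}\,\di R=d_i^{-1}\delta_{kk'}\delta_{ll'}$, which simultaneously produces the factor $d_i^{-1}$ and kills all mixed terms, and then integrate the surviving terms over $\R\rtimes\R_+$ using the classical one-dimensional wavelet reproducing identity $\int|\scal{h}{\wh{W}^+(b,a)g}|^2\,\di b\,\frac{\di a}{a^2}=\nor{h}^2\int_0^{+\infty}|g(\omega)|^2\,\frac{\di\omega}{\omega}$. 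This gives
\[
\int_G\bigl|\scal{f_i}{\sigma_i(g)\eta_i}\bigr|^2\,\di g=\frac{\nor{f_i}^2}{d_i}\int_0^{+\infty}\nor{(S\eta)_i(\omega)}^2_{\hh_i}\,\frac{\di\omega}{\omega},
\]
so the $i$-th summand reproduces the identity exactly when \eqref{eq:19b} holds for that $i$. The same two-step computation, with Schur's relations now applied to the inequivalent pair $\rho_i,\rho_j$, shows $\int_G\scal{f_i}{\sigma_i(g)\eta_i}\,\overline{\scal{f_j}{\sigma_j(g)\eta_j}}\,\di g=0$ for $i\neq j$.

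To conclude: necessity of \eqref{eq:19b} follows by applying the reproducing identity of $\pi'$ to vectors supported on a single summand — the displayed formula forces the integral to be finite (otherwise the voice transform of such a vector would fail to lie in $L^2(G)$) and then equal to $d_i$. For sufficiency, assume \eqref{eq:19b} for all $i$; for $f$ with only finitely many nonzero components $(Sf)_i$, expanding $|\scal{f}{\pi'(g)\eta}|^2$ and using the two identities above gives $\int_G|\scal{f}{\pi'(g)\eta}|^2\,\di g=\sum_i\nor{(Sf)_i}^2=\nor{f}^2$; since the finitely supported vectors are dense, $f\mapsto\scal{f}{\pi'(\cdot)\eta}$ is a densely defined isometry into $L^2(G)$, hence extends to all of $\bigoplus_i L^2(\wR_+,\hh_i)$, and an a.e.-convergence argument identifies the extension with the voice transform, so $\eta$ is admissible. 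The two points I expect to require genuine care are this last passage from finitely supported to general $f$ — i.e.\ controlling the countable orthogonal sum of square-integrable components of the non-unimodular group $G$, precisely the difficulty noted in the introduction — and keeping the normalizations of the Haar measures of $\SO$ and of $G$ matched so that the constant in \eqref{eq:19b} is exactly $d_i$.
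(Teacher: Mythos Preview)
Your argument is correct and complete, but it takes a genuinely different route from the paper. You compute everything by hand: the explicit form of $\pi'$ via the polar-coordinate calculation, irreducibility and pairwise inequivalence by commutant/isotypic-component arguments, and the admissibility condition by integrating first over $\SO$ (Schur orthogonality, producing the $d_i^{-1}$) and then over $\R\rtimes\R_+$ (the one-dimensional Calder\'on identity), finally patching the summands together by density. The paper instead recognizes $\wh\pi$ as a mock-metaplectic representation in the sense of \cite{dede13} associated with $\Phi(\xi)=\xi\cdot\xi$, sets up the disintegration of Lebesgue measure along the level sets of $\Phi$ and Weil's formula on $H/\SO$ to identify the normalizing constant, and then invokes Theorem~9 of \cite{dede13} as a black box to obtain both the decomposition into irreducibles (as induced representations) and the admissibility condition. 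Your approach is more elementary and self-contained, and makes the origin of the constant $d_i$ completely transparent via Schur; the paper's approach is more structural and would generalize to other maps $\Phi$ and stability subgroups, at the price of relying on the external machinery. The paper itself signals, in a footnote, that an alternative proof along the lines of Proposition~2.23 of \cite{fuhr05} exists; your argument is essentially an explicit realization of that alternative.
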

\begin{proof}
{This} proof\footnote{{An alternative proof can  be derived using Proposition 2.23 of \cite{fuhr05}.}} is based on the general theory developed in
\cite{dede13}. We sketch the main steps. 
For any $\omega\in \wR_+$ we denote by $\nu_\omega$ the measure on
$\wR^d$ which is the image measure of
$\omega^{\frac{d-2}{2}} \di s/2$ under the map 
\[ \SP\ni s\mapsto \sqrt{\omega} s \in \wR^d,\]
so that, for all compactly supported continuous functions $\varphi$, we have
\[ \int_{\wR^d} \varphi(\xi) \di\nu_\omega(\xi) = \int_{\SP}
\varphi(\sqrt{\omega}s) \frac{\omega^{\frac{d-2}{2}}}{2}  \di s. 
\] 
The change of variable in spherical coordinates  (as in \eqref{eq:change}) gives
\begin{align*}
  \int_{\wR^d} \varphi(\xi) \di\xi & = \int_0^{+\infty} \left(\int_{\SP}
                                   \varphi(r s) r^{d-1} \di s\right) \di
                                     r \\
& = \int_0^{+\infty} \left(\int_{\SP}
                                   \varphi(\sqrt{\omega} s)
  \frac{\omega^{\frac{d-2}{2}}}{2}  \di s\right) \di \omega \\
 & = \int_0^{+\infty} \left(\int_{\wR^d} \varphi(\xi) \di\nu_\omega(\xi) \right) \di \omega,
\end{align*}
where $r^2=\omega$, so that the disintegration formula
\begin{equation}
  \label{eq:15}
  \di\xi =  \int_0^{+\infty} \nu_\omega\, \di \omega
\end{equation}
holds true. Finally, Weil's formula for quasi-invariant measure on
quotient spaces \cite{fuhr05} reads
\begin{equation}
  \label{eq:21}
  \int_{H} \varphi(a,R) \gamma(a)^{-1} \frac{\di a}{a}\di R = C
  \int_0^{+\infty} \left(\int_{\SO} \varphi(q(\omega)R) \di R\right)\di \omega ,
\end{equation}
for some constant $C$, to be computed. Recalling \eqref{eq:24} and $ q(\omega) = \omega^{-1} $, we obtain $ C = 1 $ since
\[ \int_0^{+\infty} \left(\int_{\SO} \varphi(\omega^{-1}R)\di R\right)
\di \omega= \int_0^{+\infty} \left(
\int_{\SO} \varphi(\omega,R) \di R\right)\frac{\di \omega}{\omega^2}.\] 

Observe that 
\begin{enumerate}[i)]
\item $L^2({\widehat{R}^d},2\nu_1)\simeq L^2(\SP)$;
\item  the ``restriction'' of
  the mock-metaplectic representation $\wh{\pi}$ to the fiber
  $\Phi^{-1}(1)$ and to the stability subgroup $\SO$ is precisely
  $\rho$. Hence,~\eqref{eq:16} provides the
  decomposition of $\rho$ into its irreducibles, all of them with multiplicity 1;
\item up to the normalization factor $1/\sqrt{2}$, the operator $S\F^{-1}$
  coincides with the operator introduced in \cite{dede13}, whose main
feature is that it decomposes $\wh{\pi}$ into its irreducibles,  each of
which  is the canonical representation obtained by inducing the irreducible
  representation of $\R\times \SO$ acting on $\hh_i$ as
\[(b,R)\mapsto  e^{-2\pi i b}\rho_i(R)\]
 from $\R\times \SO$ to $G$.
\end{enumerate}
Theorem 9 of \cite{dede13} shows that {$\eta\in L^2(\R^d)$}
is admissible if and only if, for all $i\in\N$,
\begin{equation*}
  %\label{eq:19}
  \int_0^{+\infty} \nor{(S{\eta})_i(\omega)}^2_{\hh_i}
  \gamma(q(\omega))\di\omega =
  \frac{\dim{\hh_i}}{C}=d_i .
\end{equation*}
Explicitly, this amounts to
\[
\int_0^{+\infty}  \nor{(S{\eta})_i(\omega)}^2_{\hh_i} \frac{\di\omega}{\omega} =d_i. 
\]
\end{proof}  

%We remark that a second proof can  be derived using Proposition 2.23 of \cite{fuhr05}, and
%it consists of  three steps:
%\begin{enumerate}[i)]
%\item a direct computation to show~\eqref{eq:6};
%\item the observation that
%  each component $\wh{W}^+ \otimes \rho_i $ is a square-integrable
%  representation, whose formal degree operator $C_i$ is the  unbounded
%  multiplication operator
%  \[ C_i \varphi (\omega)= d_i\, \omega \varphi(\omega),\]
%  where $\varphi\in L^2(\wR_+,\hh_i)$ and $\int_0^\infty \omega^2
%  \abs{\varphi(\omega)}^2\di\omega<+\infty$;
%\item a final application of Proposition 2.23 of \cite{fuhr05}, taking into account that
%  $\wh{W}^+ \otimes \rho_i $ and $\wh{W}^+ \otimes \rho_j $ are
%  inequivalent representations of $G$ if $i\neq j$. 
%\end{enumerate}

{We mentioned above after Proposition~\ref{prop:repr} that the reproducing property holds true also in absence of rotations. The same is true for the construction of the admissible vectors given in Proposition~\ref{prop:admis}. Indeed, it is enough to let $\rho$ be the trivial representation of the identity group and to substitute identity \eqref{eq:14} with
\[
  L^2(\SP)= \bigoplus_{i\in\N} \hh_i,\qquad \hh_i= \mathrm{span}\{h_i\},
\]
where $\{h_i\}_i$ is an orthonormal basis of $L^2(\SP)$, so that expression \eqref{eq:16} still provides the decomposition of $\rho$ into its irreducibles. The rest of the derivation is identical as above. As it is natural, the corresponding conditions \eqref{eq:19b} become stronger in this case, which is the price to pay for the removal of the rotations. This is the continuous counterpart of what observed on the role of rotations for the construction of a Parseval frame (see the comments after Theorem~\ref{thm:2D}).
}

\subsection{A family of admissible vectors}\label{family}

% We now give an {equivalent} description of the admissible vectors,
% which provides a direct strategy to construct them.
{We now show a procedure to construct all the admissible
  vectors~$\eta \in L^2(\R^d)$ based on the identification provided 
  by~\eqref{identifications}. } {Let ${\eta}\in L^2(\R^d)$
  satisfy~\eqref{eq:19b}.  } 

For any fixed $i\in\N$, we choose an orthonormal basis $\set{e_{i,k}}_{k=1}^{d_i}$ of
$\hh_i$, and define $\varphi_{i,k}:\wR_+\to\Cc$  by 
\[
\varphi_{i,k}(\omega)=\scal{(S{\eta})_i(\omega)}{e_{i,k}}_{\hh_i},
\]
{so that
\begin{align}
   \label{eq:3b}
   (S{\eta})_i =\sum_{k=1}^{d_i} \varphi_{i,k} \otimes e_{i,k}  .
 \end{align}
}
By construction, $\varphi_{i,k}\in L^2(\wR_+)$. Furthermore, 
{condition~\eqref{eq:19b} implies}
\[
{c_{i,k}^2:=}\int_0^{+\infty} \frac{\abs{\varphi_{i,k}(\omega)}^2}{\omega}  \di\omega<+\infty.
\]
If $\varphi_{i,k}\neq 0$, {replacing $e_{i,k}$ with
  $v_{i,k}=c_{i,k}\,e_{i,k} $ in~\eqref{eq:3b}}   we can always assume that 
\begin{subequations}
\begin{equation}
\int_0^{+\infty} \frac{\abs{\varphi_{i,k}(\omega)}^2}{\omega}  \di\omega=1,\label{eq:11}
\end{equation}
{\em i.e.} $\varphi_{i,k}$ is a $1D$-wavelet for $\wh{W}^+$. Hence 
 \begin{align}
   \label{eq:3}
   (S{\eta})_i =\sum_{k=1}^{d_i} \varphi_{i,k} \otimes v_{i,k}  ,
 \end{align}
 where $\set{v_{i,k}}_{k=1}^{d_i}$ is an orthogonal
% \footnote{\er{The
%     base $\set{e^i_k}_{k=1}^{d_i}$ is orthonormal, however the family
%     $\set{v_{i,k}}_{k=1}^{d_i}$ is only orthogonal to allow
%     $\varphi$ to be normalized}} 
     family in $\hh_i$  such that
 \begin{equation}
   \label{normalize}
   \sum_{k=1}^{d_i} \nor{v_{i,k}}^2_{\hh_i}  =d_i,
 \end{equation}
and all $\varphi_{i,k}$ satisfy~\eqref{eq:11}.
If for some $k$ the function $\scal{(S{\eta})_i(\cdot)}{e_{i,k}}_{\hh_i}$ is zero, we set
$v_{i,k}=0$ and choose an arbitary  $\varphi_{i,k}$
satisfying~\eqref{eq:11}.

The fact that ${\eta}\in L^2(\R^d)$ implies 
 \begin{equation}
   \label{eltwo}
   \sum_{i=1}^{+\infty} \sum_{k=1}^{d_i} \nor{\varphi_{i,k}}^2_2\,
   \nor{v_{i,k}}^2_{\hh_i} <+\infty.
 \end{equation}
\end{subequations}
Conversely, given a family $(\varphi_{i,k},v_{i,k})_{i\in\N,k=1,\ldots,
    d_i}$ such that 
  \begin{enumerate}[a)]
  \item each $\varphi_{i,k}$ is in $L^2(\wR_+)$ and
    satisfies~\eqref{eq:11},
\item  each family $\set{v_{i,k}}_{k=1}^{d_i}$ is
    orthogonal in $\hh_i$ and satisfies~\eqref{normalize}
    and~\eqref{eltwo},
  \end{enumerate}
then $(\varphi_{i,k},v_{i,k})_{i\in\N,k=1,\ldots,
    d_i}$ defines an admissible vector via~\eqref{eq:3}.
%\begin{remark}
A simple solution is given as follows. Choose a 1D wavelet
$\varphi\in L^2(\wR_+)$. For all $i\in\N$, fix $\alpha_i>0$ and $v_i\in \hh_i$ with
  \[\nor{v_i}^2_{\hh_i}= d_i\]
and
\[ \sum_{i\in\N} \alpha_i d_i <+\infty.\]
Define
\[
\varphi_i(\omega)= \varphi(\alpha_i^{-1}\omega) .
\]
Then, the vector ${\eta}\in L^2(\R^d)$ such that
\[ (S{\eta})_i= \varphi_i\otimes v_i\]
is admissible.

\subsection{Discretization} \label{discr}

The aim of this section is to construct a Parseval frame for
$L^2(\R^d)$ based on a discretization of the 
reproducing representation $\pi$. 

We fix a finite subgroup of  $\SO$ of cardinality $L$
\[ F=\set{R_1,\ldots, R_L},\]
and we choose as  grid points those in the  family
\begin{equation*}
 % \label{eq:8}
  x_{j,k,\ell} =(2^jk,2^j,R_\ell)\qquad j,k\in\Z,\,\ell=1,\ldots,L.
\end{equation*}
We denote by $\wh{F}$ the set of equivalence classes of   irreducible
(unitary) representations of $F$, and for each equivalence class in
$\wh{F}$ we fix a representative $\chi:F\to
\mathcal{U}(\hh_{\chi})$, where $\hh_{\chi}$ is the Hilbert
space on which  $\chi$ acts and $\mathcal{U}(\hh_{\chi})$ is the
corresponding set of unitary operators. 
The dimension of $\hh_{\chi}$, which is
 always finite, is denoted by $d_{\chi}$.

For each $i\in\N$, the representation $\rho_i$ restricted to $F$
decomposes into its irreducibles
\begin{equation}
  \label{eq:12}
  \hh_i= \bigoplus_{\chi\in\wh{F}}  \hh_{\chi} \otimes \Cc^{m_{i,\chi}}
\qquad {\rho_i}_{|F} = \bigoplus_{\chi\in\wh{F}}  \chi\otimes \operatorname{I}_{m_{i,\chi}},
\end{equation}
where $m_{i,\chi}\in \N$ is the multiplicity of $\chi$
into ${\rho_i}_{|F}$ (with the convention that $\Cc^{0}=\set{0}$ if $m_{i,\chi}=0$, namely when the representation $\chi$ does not enter into the
decomposition). 

%\begin{remark}\label{rem:visualize}
\vspace{.2cm}

We remark that in the two-dimensional case the picture is   clearer (see Section~\ref{sec:2D} and  the remarks that follow \eqref{eq:16}). Taking $F=\{2\pi l/L:l=0,\dots,L-1\}$, the set $\wh{F}$ is given by $L$ one-dimensional representations corresponding to the $L$-roots of unity, namely $\wh{F}=\{\chi_l(\cdot)=e^{2\pi i l\cdot/L}:l=0,\dots,L-1\}$. Writing $\hh_k={\rm span}\{e^{ik\cdot}\}$ for $k\in\Z$ (as already observed, the natural index set in 2D is $\Z$), a simple calculation shows that $\rho_k$ corresponds to $\chi_{\bar{k}}$, where $\bar{k}=k\mod L$. Therefore, in the above decomposition one has
\[
 m_{k,\chi_l}=\begin{cases}
             1 & \text{if $k-l\in L\Z$}\\
             0 & \text{otherwise,}
            \end{cases}
\]
or, equivalently, $\hh_k=\hh_{\chi_{\bar{k}}}$.
%\end{remark}

\vspace{.2cm}

{Back to the case $ d \ge 3 $,} from \eqref{eq:14} and \eqref{eq:12} we finally obtain the decomposition of ${\rho}_{|F}$  into its irreducibles
\begin{equation}
  \label{eq:13}
  L^2(\SP) = \bigoplus_{\chi\in\wh{F}}  \hh_{\chi} \otimes
  \Cc^{m_\chi} \qquad {\rho}_{|F}=
  \bigoplus_{\chi\in\wh{F}}  \chi \otimes \operatorname{I}_{m_\chi},
\end{equation}
where $m_\chi=\sum_{i\in\N}m_{i,\chi}$, the operator $\operatorname{I}_{m_\chi}$ is the
identity  on $\Cc^{m_\chi}$ and
$\Cc^{\infty}=\ell_2(\N)$ if $\sum_{i\in\N}m_{i,\chi}=\infty$. By \eqref{identifications} and \eqref{eq:13},
the following identifications hold true:
\begin{equation} \label{identifications2}
L^2(\wR^d)= \bigoplus_{i\in\N,\chi\in\wh{F}}
L^2(\wR_+, \hh_{\chi}) \otimes \Cc^{m_{i,\chi}} =
\bigoplus_{i\in\N,\chi\in\wh{F}}\bigoplus_{\mu=1}^{m_{i,\chi}} L^2(\wR_+, \hh_{\chi}
\otimes \Cc\set{\eps_\mu}) ,
\end{equation}
where $(\eps_\mu)_{\mu\in\N}$ is the canonical basis of $\ell^2(\N)$ and each
$\Cc^{m_{i,\chi}}$ is regarded as a closed subspace of $\ell^2(\N)$.
According to this decomposition,  we denote by $P_{i,\chi,\mu}$ the orthogonal 
projection from $L^2(\wR^d)$ onto  the closed subspace $L^2(\wR_+, \hh_{\chi}
\otimes \Cc\set{\eps_\mu})$ of $L^2(\wR_+,\hh_i)$.

Next, for each $\chi$, we select an orthogonal {basis}
$w^\chi_{1},\ldots,w^\chi_{d_{\chi}}$  in $\hh_{\chi} $ such that
\begin{equation}
  \label{eq:18}
  \nor{w^\chi_\delta}^2= d_{\chi}\qquad \delta=1,\ldots,d_{\chi}.
\end{equation}
 For each $i\in\N$, we choose $m_{i,\chi}$ vectors in this family
and  denote by
$\Delta_{i,\chi}=(\delta_1,\ldots,\delta_{m_{i,\chi}})$ the
corresponding family of indices {(since any element
  $w^{\chi}_\delta$ can chosen many times, it  can happen that
$\delta_\mu=\delta_{\mu'}$ for some pair of indices)}. We set
\begin{equation}
  \label{eq:17}
 v_{i,\chi,\mu} = w^\chi_{\delta_\mu} \otimes \eps_\mu \qquad
 \mu=1,\ldots, m_{i,\chi},
\end{equation}
where each $v_{i,\chi,\mu}$ is a vector in $\hh_i$ by means
of~\eqref{eq:12}.

Finally, we select $m_{i,\chi}$ functions
$\varphi_{i,\chi,1},\ldots, \varphi_{i,\chi,m_{i,\chi}}\in L^2(\wR_+)$ such that the following conditions hold true:

\begin{enumerate}[a)]
\item the series
\begin{equation}
  \label{eq:25}
  \sum_{i\in\N}\sum_{\chi\in\hat{F}} d_{\chi} \left(\sum_{\mu=1}^{m_{i,\chi}}
  \nor{\varphi_{i,\chi,\mu} }^2_2  \right)<+\infty;
\end{equation}
\item for each $i\in\N$, $\chi\in\wh{F}$ and $\mu=1,\ldots,m_{i,\chi}$
  \begin{subequations}\label{eq:9}
    \begin{equation}
      \label{eq:9b}
      \sum_{j\in\Z} \abs{\varphi_{i,\chi,\mu}(2^j \omega)}^2 =\frac{1}{L} \qquad
      \text{a.e.}\ \omega \in \wR_+,
    \end{equation}
    and for all odd integers $m$
    \begin{equation}
      \label{eq:9c}
      \sum_{j=0}^{+\infty}   \varphi_{i,\chi,\mu}(2^j \omega)
      \overline{\varphi_{i,\chi,\mu}(2^j (\omega+2\pi m))}=0 \qquad
      \text{a.e.}\ \omega \in \wR_+;
    \end{equation}
  \end{subequations}
\item for all $\chi\in\wh{F}$, if there exists $i,i'\in\N$ and
  $\mu=1,\ldots,m_{i,\chi}$, $\mu'=1,\ldots,m_{i',\chi}$ such that
$(i,\mu)\neq(i',\mu')$, but  $w_{\delta_\mu}^{\chi}=w_{\delta_{\mu'}}^{\chi}$ (where $\delta_\mu\in \Delta_{i,\chi}$ and
  $\delta_{\mu'}\in \Delta_{i',\chi}$), then
  \begin{subequations}\label{eq:1a}
    \begin{align}
      \label{eq:1}
       \sum_{j\in\Z} \varphi_{i,\chi,\mu}(2^j
      \omega)\overline{\varphi_{i',\chi,\mu'}(2^j \omega)}=0 \qquad
      \text{a.e.}\ \omega \in \wR_+,
    \end{align}
and for all odd integers $m$
\begin{equation}
  \label{eq:1b}
\sum_{j=0}^{+\infty}   \varphi_{i,\chi,\mu}(2^j \omega)
\overline{\varphi_{i',\chi,\mu'}(2^j (\omega+2\pi m))}=0 \qquad
      \text{a.e.}\ \omega \in \wR_+.
\end{equation} 
  \end{subequations}
\end{enumerate}
Let us comment on the relation between these assumptions and the corresponding ones given in the two-dimensional case.
 Assumption \eqref{eq:25} is simply a restatement of the fact that $\widehat{\eta}$ should have finite norm. Assumptions \eqref{eq:9} and \eqref{eq:1a} correspond to assumptions \eqref{eq:assuM2D-1} and \eqref{eq:assuM2D-2}, respectively. As we have already anticipated when discussing the 2D case,  the condition $(i,\mu)\neq(i',\mu')$ corresponds to $m\neq n$ and $w_{\delta_\mu}^{\chi}=w_{\delta_{\mu'}}^{\chi}$ corresponds to  $m-n\in L\Z$. 
%\end{remark}

We are now ready to state the main result of this paper.
\begin{theorem}\label{thm:main}
Let ${\eta}\in L^2(\R^d)$ be defined by
\begin{equation}
  \label{eq:32}
  (S{\eta})_i = \sum_{\chi\in\wh{F}} \sum_{\mu=1}^{m_{i,\chi}} \varphi_{i,\chi,\mu} \otimes v_{i,\chi,\mu}.    
\end{equation}
Then the family $\set{ \pi(2^jk,2^j,R_\ell){\eta})}_{j,k\in\Z,l=1,\ldots,L}$ is a Parseval frame for $L^2(\R^d)$.
\end{theorem}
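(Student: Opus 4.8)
The plan is to reduce the $d$-dimensional statement to a collection of one-dimensional wavelet frame statements, organized according to the decomposition $L^2(\wR^d) \simeq \bigoplus_{i\in\N}\bigoplus_{\chi\in\wh F}\bigoplus_{\mu=1}^{m_{i,\chi}} L^2(\wR_+,\hh_\chi\otimes\Cc\set{\eps_\mu})$ given by~\eqref{identifications2}. Since $S$ is unitary and intertwines $\pi$ with $\bigoplus_i \wh W^+\otimes\rho_i$ by Proposition~\ref{prop:admis}, it suffices to prove that $\set{(S\pi(2^jk,2^j,R_\ell)S^{-1})(S\eta)}$ is a Parseval frame for $\bigoplus_i L^2(\wR_+,\hh_i)$. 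Under $S$, the group element $(2^jk,2^j,R_\ell)$ acts on the $i$-th component as $\wh W^+(2^jk,2^j)\otimes\rho_i(R_\ell)$; the key point is that $R_\ell$ ranges over the finite subgroup $F$, so the angular part is governed by the representation theory of $F$. First I would expand $\abs{\scal{\pi(x_{j,k,\ell})\eta}{f}}^2$ for $f\in L^2(\wR^d)$ and sum over $\ell=1,\dots,L$: using~\eqref{eq:12}, the sum over $F$ of the matrix coefficients of $\rho_i$ against those of $\rho_{i'}$ collapses, by Schur orthogonality for the finite group $F$, to a sum only over pairs sharing the same irreducible $\chi$ and the same basis vector $w^\chi_\delta$ — this is exactly why assumptions~\eqref{eq:9} and~\eqref{eq:1a} are phrased the way they are, and why cross terms with $w^\chi_{\delta_\mu}\neq w^\chi_{\delta_{\mu'}}$ need not be controlled.

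Next I would carry out the summation over $j,k\in\Z$. After the Schur reduction, what remains is, for each surviving pair of ``radial'' profiles $\varphi_{i,\chi,\mu}$ and $\varphi_{i',\chi,\mu'}$, an expression of the form $\sum_{j,k\in\Z}\abs{\scal{\wh W^+(2^jk,2^j)\varphi}{g}}^2$ together with cross terms. This is precisely the situation handled by the one-dimensional wavelet frame theory of \cite[Chapter~7]{herweis96}: the classical computation unfolds $\sum_k$ via the Plancherel formula on the lattice $2^j\Z$, producing the diagonal term $\sum_j\abs{\varphi(2^j\omega)}^2$ and the ``aliasing'' terms $\sum_{j\ge0}\varphi(2^j\omega)\overline{\varphi(2^j(\omega+2\pi m))}$ for odd $m$. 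Conditions~\eqref{eq:9b}--\eqref{eq:9c} make the diagonal contributions add up to $1/L$ and kill the self-aliasing, while~\eqref{eq:1}--\eqref{eq:1b} kill the cross-aliasing between distinct profiles attached to the same $w^\chi_\delta$. Summing over $\ell$ contributes the compensating factor $L$, and summing over all indices $i,\chi,\mu$ reassembles $\nor{f}^2$. The normalization~\eqref{eq:18}, \eqref{normalize} ensures the angular vectors carry the correct weights $d_\chi$, and~\eqref{eq:25}, \eqref{eltwo} guarantee $\eta\in L^2$.

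The main obstacle, and the place where the paper invokes Lemma~\ref{lem:4} and ``a slight generalization of~\cite[Theorem~1.6, Chapter~7]{herweis96}'', is bookkeeping the interchange of the various infinite sums and making the Schur-orthogonality collapse rigorous in the presence of infinite multiplicities $m_\chi$ and an infinite direct sum over $i$. One cannot simply cite the scalar 1D result componentwise, because the frame vectors $(S\eta)_i$ are not supported in a single component — they couple different $i$ sharing a common $w^\chi_\delta$ — so the argument must be run at the level of the full Hilbert space, carefully justifying that the positive-term rearrangements and the $\ell^2$-Plancherel steps are valid. I would isolate this as a technical lemma (the Parseval-frame lemma mentioned as Lemma~\ref{lem:4}): if one has an orthogonal decomposition of a Hilbert space and a family of vectors whose ``overlap structure'' respects the decomposition up to the finite-group symmetry, then verifying the Parseval identity reduces to verifying it blockwise plus the vanishing of the explicitly listed cross terms. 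Granting that lemma and the generalized 1D wavelet frame result, the proof is the combination of the Schur reduction over $F$ and the wavelet unfolding over $\R\rtimes\R_+$ described above.
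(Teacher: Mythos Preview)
Your proposal is correct and follows essentially the same approach as the paper: transfer via $S$ to the decomposition~\eqref{identifications2}, apply Schur orthogonality for the finite group $F$ to collapse the sum over $\ell$, invoke the one-dimensional wavelet frame identities (Lemma~3.7, based on \cite[Ch.~7]{herweis96}) for the sum over $(j,k)$, and package the blockwise verification via Lemma~\ref{lem:4}. The paper organizes the computation by fixing test vectors of the form $\varphi\otimes w\otimes\eps_\mu$ and evaluating the bilinear quantity $A(i,\chi,\mu,i',\chi',\mu')$, which factors as a product of the $\lambda$-sum and the $\ell$-sum, but this is exactly the mechanism you describe.
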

The proof is in~Section~\ref{sec:proof}. We add a few comments.
Since $\sum_{\chi\in\wh{F}} m_{i,\chi} d_{\chi} = d_i$, we have
\[
\sum_{\chi\in\wh{F}} \sum_{\mu=1}^{m_{i,\chi}} \nor{v_{i,\chi,\mu}}_{\hh_i}^2=d_i,
\]
hence~\eqref{eq:25} ensures that~\eqref{eq:32} is well defined
(compare with ~\eqref{eq:3}).

An important result in wavelet theory \cite[Theorem 1.6, Chapter~7]{herweis96} shows
that~\eqref{eq:9b} and~\eqref{eq:9c} are equivalent to the fact that
for each $i\in\N$, $\chi\in\wh{F}$ and $\mu=1,\ldots,m_{i,\chi}$ the family $\set{\wh{W}^+(2^jk,2^j) \sqrt{L}\varphi_{i,\chi,\mu}}_{j,k\in\Z}$ is a
Parseval frame for $L^2(\wR_+)$.  Furthermore, ~\eqref{eq:9b} implies that
\begin{equation}
  \label{eq:20}
  \int_{\wR_+} \frac{\abs{\varphi_{i,\chi,\mu}(\omega)}^2}{\omega} \di\omega 
=\frac{\ln 2}{L},
\end{equation}
so that $\sqrt{ L/\ln 2}\,{\eta}$ is an admissible
vector for $\pi$ by Proposition~\ref{prop:admis}.

We now show that there exist families of $\set{\varphi_{i,\chi,\mu}}$,
satisfying the above conditions. To this end, 
fix a function $\varphi\in L^2(\wR_+)$  supported in $[0,1]$ and such that
\begin{equation}
      \label{eq:99}
      \sum_{j\in\Z} \abs{\varphi(2^j \omega)}^2 =\frac{1}{L} \qquad
      \text{a.e.}\ \omega \in \wR_+.
 \end{equation}
Choose a sequence $\set{\alpha_{i,\chi,\mu}}$ such that $0<\alpha_{i,\chi,\mu}<1$ and
\begin{equation}
\label{eq:31}
\sum_{i\in\N}\sum_{\chi\in\hat{F}} d_{\chi} \sum_{\mu=1}^{m_{i,\chi}} \alpha_{i,\chi,\mu}<+\infty.
\end{equation}
Suppose further that,  for any  $\chi\in\wh{F}$, if there exists $i,i'\in\N$ and
  $\mu=1,\ldots,m_{i,\chi}$, $\mu'=1,\ldots,m_{i',\chi}$ such that
  $(i,\mu)\neq(i',\mu')$ but  $w_{\delta_\mu}^{\chi}=w_{\delta_{\mu'}}^{\chi}$ (where $\delta_\mu\in \Delta_{i,\chi}$ and
  $\delta_{\mu'}\in \Delta_{iÕ,\chi}$), then
\begin{equation}
  \label{eq:29}
|(\supp{\varphi}\cap \alpha_{i,\chi,\mu}^{-1}
  \alpha_{i',\chi,\mu'}\supp{\varphi}|= 0.
\end{equation}
An explicit example is 
\begin{align*}
  \varphi& =\chi_{(1/2,1]} ,\\
  \alpha_{i,\chi,\mu}& = \frac{1}{2^{n_{i,\chi,\mu}}},
\end{align*}
where $(i,\chi,\mu)\mapsto n_{i,\chi,\mu}$ is any bijection from the index set 
\[ \NN=\set{ (i,\chi,\mu)\mid i\in\N,\, \chi\in\wh{F},\, m_{i,\chi}>0,\,
  \mu=1,\ldots,m_{i,\chi}}\]
onto $\N$.

With the above choices, define
\[
\varphi_{i,\chi,\mu} (\omega)= \varphi(\alpha_{i,\chi,\mu}^{-1}\omega)\qquad \omega\in\wR_+.
\]
Now, the sum in \eqref{eq:9c} contains products of the form
\[
\varphi\left(\frac{2^j \omega}{\alpha_{i,\chi,\mu}}\right)
\varphi\left(\frac{2^j \omega}{\alpha_{i,\chi,\mu}}+\frac{2^j 2\pi m}{\alpha_{i,\chi,\mu}}\right).
\]
Since $|2^j 2\pi m/\alpha_{i,\chi,\mu}|>|2^j 2\pi m|>1$ for every odd integer $m$ and every non-negative integer $j$, one of the two factors must always vanish, so that \eqref{eq:9c} holds true.
Similarly,~\eqref{eq:29}
implies~\eqref{eq:1} and ~\eqref{eq:1b}.

\subsection{Proof of Theorem~\ref{thm:main} }\label{sec:proof}

We first prove a technical lemma, which is a variant of a well known result  (see Lemma
1.10 of \cite{herweis96}).

We recall that a family $(\psi_i)_{i\in \N}$ 
in a separable Hilbert
space $\hh$ is a Parseval frame
if one of the following two equivalent conditions is satisfied:
\begin{enumerate}[a)]
\item for all $f\in\hh$ 
\[\sum_{i\in \N} \scal{f}{\psi_i} \psi_i=f ; \] 
\item   for all $f\in\hh$
 \[ \sum_{i\in \N} \abs{{\scal{f}{\psi_i}}}^2=\nor{f}^2,\]
\end{enumerate}
see Theorem~1.7 Chapter 7 of \cite{herweis96}.  Both  series
{converge} unconditionally. For a thorough discussion on frames see e.g.
\cite{ole03,heil11}.

\begin{lemma}\label{lem:3}
Let $(\psi_i)_{i\in\N}$ be a  family of vectors in $\hh$. If there
exists a total subset $\mathcal S$ of $\hh$ such that
\begin{enumerate}[a)]
\item for all $f\in\mathcal S$ the sequence
  $(\scal{f}{\psi_i})_{i\in \N}$ is in $\ell^2(\N)$;
\item for all $f,g\in\mathcal S$
  \begin{equation}
    \label{eq:2}
    \sum_{i\in \N}  \scal{f}{\psi_i}\scal{\psi_i}{g} =\scal{f}{g},
  \end{equation}
\end{enumerate}
then the family $(\psi_i)_{i\in \N}$ is a Parseval frame.
\end{lemma}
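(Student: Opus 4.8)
The plan is to show that the analysis operator associated with $(\psi_i)_{i\in\N}$ extends to an isometry on all of $\hh$. I would set $V=\lspan{\mathcal S}$; since $\mathcal S$ is total, $V$ is dense in $\hh$. Hypothesis a) gives $(\scal{f}{\psi_i})_i\in\ell^2(\N)$ for $f\in\mathcal S$, and since each element of $V$ is a finite linear combination of elements of $\mathcal S$, the same holds for every $f\in V$; hence $C\colon V\to\ell^2(\N)$, $Cf=(\scal{f}{\psi_i})_i$, is a well-defined linear map. Both sides of~\eqref{eq:2} are sesquilinear in $(f,g)$, and for $f,g\in V$ the left-hand series converges absolutely by the Cauchy--Schwarz inequality in $\ell^2(\N)$ (using $Cf,Cg\in\ell^2(\N)$); so~\eqref{eq:2} extends from $\mathcal S\times\mathcal S$ to $V\times V$. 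Taking $g=f$ then yields $\nor{Cf}_{\ell^2}^2=\sum_i\abs{\scal{f}{\psi_i}}^2=\scal{f}{f}=\nor{f}_\hh^2$ for all $f\in V$, i.e.\ $C$ is a linear isometry on the dense subspace $V$.

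Next I would use that a linear isometry on a dense subspace of a Hilbert space extends uniquely to a linear isometry $\overline C\colon\hh\to\ell^2(\N)$, and then check that $\overline C f=(\scal{f}{\psi_i})_i$ for arbitrary $f\in\hh$. To do so, choose $f_n\in V$ with $f_n\to f$ in $\hh$; then $Cf_n=\overline C f_n\to\overline C f$ in $\ell^2(\N)$, and convergence in $\ell^2$ forces coordinatewise convergence, so for each fixed $i$ the $i$-th coordinate of $\overline C f$ equals $\lim_n\scal{f_n}{\psi_i}=\scal{f}{\psi_i}$ by continuity of the inner product. Hence $(\scal{f}{\psi_i})_i=\overline C f\in\ell^2(\N)$ and $\sum_i\abs{\scal{f}{\psi_i}}^2=\nor{\overline C f}_{\ell^2}^2=\nor{f}_\hh^2$, which is condition b) in the characterization of Parseval frames recalled before the lemma; this would conclude the proof.

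The main obstacle — really the only nontrivial point — is the passage from $V$ to all of $\hh$: density alone only gives the norm identity on $V$, and one must additionally verify that the extended isometry still computes the coefficients $\scal{f}{\psi_i}$, which is exactly what the coordinatewise-convergence step takes care of. Everything else (absolute convergence of the series, the sesquilinear extension of~\eqref{eq:2}, the isometry extension) is routine.
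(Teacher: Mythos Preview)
Your proof is correct and follows essentially the same approach as the paper: define the analysis map on the span of $\mathcal S$, show it is an isometry there via sesquilinear extension of~\eqref{eq:2}, extend by density, and then verify that the extension still computes the coefficients $\scal{f}{\psi_i}$. The only difference is in this last step: the paper invokes the fact that the analysis operator on its maximal domain is closed (citing \cite{fuhr05}), whereas you give a direct and more elementary coordinatewise-limit argument, which works just as well.
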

\begin{proof}
Define 
\[\mathcal D= \set{f\in \hh\mid \sum_{i\in \N}\abs{\scal{f}{\psi_i}}^2<+\infty }\]
and $V:\mathcal D\to \ell^2(\N)$
\[ Vf= (\scal{f}{\psi_i})_{i\in \N}.\]
By construction, $\mathcal D$ is a linear subspace containing $\mathcal
S$, so that $\mathcal D$ is dense and $V$ is a linear operator. It is
known that $V$ is a closed operator, see Proposition 2.8 of \cite{fuhr05}.    
By~\eqref{eq:2}, the restriction of $V$ to $\mathcal S$ preserves the
scalar product. By linearity, the same property holds on the linear subspace
spanned by $\mathcal S$, which is contained in $\mathcal D$
and  dense in $\hh$ since $\mathcal S$ is total in $\hh$. Then $V$ extends to
a unique isometry $W$ from $\hh$ into $\ell^2(\N)$.  Since $V$ is closed,
then $\mathcal D=\hh$ and $V=W$. By definition of $V$,
 the family $(\psi_i)_{i\in \N}$ is a Parseval frame.
\end{proof}

The following lemma is a variant of a result given in
\cite{fuhr05} in the context of admissible representations,  see Proposition 2.23.
\begin{lemma}\label{lem:4}
Take two countable families $(\hh_j)_{j\in \N}$ and $(\hh'_j)_{j\in \N}$
of separable Hilbert spaces, 
set $\hh=\bigoplus_{j\in \N} \hh_j\otimes \hh'_j$ and, for all $j\in \N$,
denote the canonical
projection by $P_j:\hh\to \hh_j \otimes \hh'_j$.  A family $(\psi_i)_{i\in \N}$ is a Parseval frame for
$\hh$ if and only if  the following two conditions hold true:
\begin{enumerate}[a)]
\item for all $j\in \N$  and all $f\in\hh_j$, $f'\in\hh'_j$
\[  \sum_{i\in \N} |\scal{f\otimes f'}{P_j\psi_i}|^2= \nor{f}^2_{\hh_j} \nor{f'}^2_{\hh'_j} ; \]
\item for all $j,k\in \N$, $j\neq k$ and for all $f\in\hh_j$,
  $f'\in\hh'_j$, $g\in\hh_k$, $g'\in\hh'_k$
\[ \sum_{i\in \N} \scal{f\otimes f'}{P_j\psi_i}\scal {P_k\psi_i}{g\otimes g'}=0.\]
\end{enumerate}
\end{lemma}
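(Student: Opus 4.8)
The plan is to derive both implications from the two equivalent descriptions of a Parseval frame recalled just before Lemma~\ref{lem:3}, and, for the harder direction, from Lemma~\ref{lem:3} applied to a conveniently chosen total set. Throughout I identify each $\hh_j\otimes\hh'_j$ with the corresponding summand of $\hh$, so that $\scal{h}{\psi_i}=\scal{h}{P_j\psi_i}$ whenever $h$ lies in $\hh_j\otimes\hh'_j$, and I take as total set $\mathcal S$ the linear span of $\bigcup_{j\in\N}\set{f\otimes f'\mid f\in\hh_j,\ f'\in\hh'_j}$, namely the finite linear combinations of elementary tensors each living in a single summand; this set is clearly total in $\hh$.

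For the necessity of (a) and (b): if $(\psi_i)_i$ is a Parseval frame then $\sum_i|\scal{h}{\psi_i}|^2=\nor{h}^2$ for all $h\in\hh$, and polarizing (the series being absolutely convergent by Cauchy--Schwarz in $\ell^2$) gives $\sum_i\scal{h}{\psi_i}\scal{\psi_i}{h'}=\scal{h}{h'}$ for all $h,h'\in\hh$. Specializing to $h=f\otimes f'\in\hh_j\otimes\hh'_j$ and $h'=g\otimes g'\in\hh_k\otimes\hh'_k$ yields (a) (take $k=j$, $g=f$, $g'=f'$) and (b) (take $k\neq j$, so $\scal{h}{h'}=0$ by orthogonality of distinct summands). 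For the converse I would verify the two hypotheses of Lemma~\ref{lem:3} for this $\mathcal S$. Hypothesis (a) of Lemma~\ref{lem:3} is immediate: by condition (a) the sequence $(\scal{f\otimes f'}{P_j\psi_i})_i$ has $\ell^2$-norm $\nor{f}\nor{f'}$, and the triangle inequality in $\ell^2$ handles finite sums. For hypothesis (b) of Lemma~\ref{lem:3}, sesquilinearity reduces the claim $\sum_i\scal{f}{\psi_i}\scal{\psi_i}{g}=\scal{f}{g}$ to the case $f=f_0\otimes f'_0\in\hh_j\otimes\hh'_j$, $g=g_0\otimes g'_0\in\hh_k\otimes\hh'_k$, the series converging absolutely by the $\ell^2$ bounds just noted and Cauchy--Schwarz. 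When $j\neq k$ both sides vanish, the left one by condition (b) and the right one by orthogonality of the summands.

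The real work, and the step I expect to be the main obstacle, is the diagonal case $j=k$: there I must show $\sum_i\scal{f_0\otimes f'_0}{P_j\psi_i}\scal{P_j\psi_i}{g_0\otimes g'_0}=\scal{f_0}{g_0}\scal{f'_0}{g'_0}$ knowing condition (a) only for \emph{elementary} tensors, which do not form a subspace, so a single polarization is illegal. The device is a double polarization. Fixing $f'_0$, the map $f\mapsto\sum_i|\scal{f\otimes f'_0}{P_j\psi_i}|^2$ equals, by (a), the genuine quadratic form $\nor{f}^2\nor{f'_0}^2$ on all of $\hh_j$, so polarizing it in $f$ gives $\sum_i\scal{f_0\otimes f'_0}{P_j\psi_i}\scal{P_j\psi_i}{g_0\otimes f'_0}=\scal{f_0}{g_0}\nor{f'_0}^2$ for all $f_0,g_0\in\hh_j$ and all $f'_0\in\hh'_j$. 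Now, with $f_0,g_0$ fixed, regard $(u,u')\mapsto\sum_i\scal{f_0\otimes u}{P_j\psi_i}\scal{P_j\psi_i}{g_0\otimes u'}$ as a sesquilinear form on $\hh'_j$ whose diagonal has just been identified with $\scal{f_0}{g_0}\nor{u}^2$; polarizing once more, this time in the second tensor factor, produces exactly $\scal{f_0}{g_0}\scal{f'_0}{g'_0}=\scal{f}{g}$, as needed. With both hypotheses of Lemma~\ref{lem:3} verified, that lemma yields that $(\psi_i)_i$ is a Parseval frame. (Equivalently, the double polarization shows that condition (a) alone already forces each $(P_j\psi_i)_i$ to be a Parseval frame for $\hh_j\otimes\hh'_j$, while condition (b) records the orthogonality of the analysis maps attached to different summands; together these are precisely what the Parseval identity on $\hh$ amounts to after splitting into components.)
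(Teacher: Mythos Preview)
Your argument is correct and follows essentially the same route as the paper: both directions hinge on Lemma~\ref{lem:3} applied to the total set of elementary tensors in the individual summands. The paper's converse direction simply asserts that ``Conditions a) and b) imply that~\eqref{eq:2} of Lemma~\ref{lem:3} is satisfied'' for $\mathcal S=\bigcup_j\{P_j^*(f\otimes f')\}$, whereas you spell out the double polarization needed in the diagonal case $j=k$ to pass from the squared-modulus identity in (a) to the full sesquilinear identity required by~\eqref{eq:2}; this step is genuinely necessary and the paper leaves it implicit.
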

\begin{proof}
 Assume that  $(\psi_i)_{i\in \N}$ is a Parseval frame for
$\hh$ and fix $j\in \N$. Given $f\in\hh_j$ and $f'\in\hh'_j$, we have
\[ P^*_j (f\otimes f'_j)= \sum_{i\in \N} \scal{P^*_j  (f\otimes f'_j)}{\psi_i} \psi_i.\]
For all $k\in \N$, $P_k$ is a bounded linear operator, and $P_k P^*_j
=\delta_{jk}P_jP^*_j= \delta_{jk} \operatorname{Id}_{\hh_j\otimes\hh'_j}$. Then
\[
\begin{cases}
 \sum_{i\in \N} \scal{f\otimes f'_j}{P_j \psi_i} P_j\psi_i =f\otimes f'_j& k=j\\
 & \\
 \sum_{i\in \N} \scal{f\otimes f'_j}{P_j \psi_i} P_k\psi_i=0 & k\neq j,
\end{cases}
\]
whence a) and b) easily follow.

Conversely, set 
\[ \mathcal S=\bigcup_{j\in \N} \set{P^*_j(f\otimes f')\mid
  f\in\hh_j,f'\in\hh'_j} ,\]
which is total in $\hh$ by construction. Conditions a) and b) imply
that~\eqref{eq:2} of Lemma~\ref{lem:3} is satisfied, hence,
$(\psi_i)_{i\in \N}$ is a Parseval frame.
\end{proof}

The following result is a restatement of the well known
characterization of wavelet Parseval frames.
For the sake of clarity, we set $\lambda=(j,k)\in \Lambda=\Z^2$ and
$x_\lambda=(2^jk,2^j)\in \R\times\R_+$.
\begin{lemma}
If the family $\set{\varphi_{i,\chi,\mu}}$ in $L^2(\wR_+)$
satisfies~\eqref{eq:9b}, \eqref{eq:9c}, \eqref{eq:1} and \eqref{eq:1b},
then 
\begin{enumerate}[a)]
  \item for each $i\in\N$, $\chi\in\wh{F}$ and $\mu=1,\ldots,m_{i,\chi}$
 \begin{equation}
    \label{eq:27}
    \sum_{\la\in\Lambda}
    \abs{\scal{\varphi}{\wh{W}^+(x_\la)\varphi_{i,\chi,\mu}}_2}^2=\frac{1}{L} \nor{\varphi}_2^2
  \end{equation}
for all $\varphi\in L^2(\wR_+)$;
\item for all $\chi\in\wh{F}$, if there exists $i,i'\in\N$ and
  $\mu=1,\ldots,m_{i,\chi}$, $\mu'=1,\ldots,m_{i',\chi}$ such that $(i,\mu)\neq (i',\mu')$ but
  $w_{\delta_\mu}^{\chi}=w_{\delta_{\mu'}}^{\chi}$ (where $\delta_\mu\in \Delta_{i,\chi}$ and
  $\delta_{\mu'}\in \Delta_{iÕ,\chi}$), then
\begin{equation}\label{eq:30}
\sum_{\la\in\Lambda}
    \scal{\varphi}{\wh{W}^+(x_\la)\varphi_{i,\chi,\mu}}_2 
\scal{\wh{W}^+(x_\la)\varphi_{i',\chi,\mu'}}{\varphi'}_2= 0
\end{equation}
for all $\varphi,\varphi'\in L^2(\wR_+)$.
\end{enumerate}
\end{lemma}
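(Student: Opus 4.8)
The plan is to read part~a) directly off the classical characterization of wavelet Parseval frames, and then to bootstrap part~b) from it by polarizing over the ``combined'' generators $\varphi_{i,\chi,\mu}\pm\varphi_{i',\chi,\mu'}$ and $\varphi_{i,\chi,\mu}\pm i\,\varphi_{i',\chi,\mu'}$. For part~a), as recalled just before the lemma, \cite[Theorem~1.6, Chapter~7]{herweis96} states that \eqref{eq:9b} and \eqref{eq:9c} are equivalent to the fact that, for each admissible triple $(i,\chi,\mu)$, the family $\set{\wh{W}^+(x_\la)\sqrt{L}\,\varphi_{i,\chi,\mu}}_{\la\in\Lambda}$ is a Parseval frame for $L^2(\wR_+)$; writing out the Parseval identity and pulling the real scalar $\sqrt{L}$ out of the inner product gives \eqref{eq:27} for every $\varphi\in L^2(\wR_+)$.

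For part~b) I would fix a pair with $(i,\mu)\neq(i',\mu')$ and $w^\chi_{\delta_\mu}=w^\chi_{\delta_{\mu'}}$, abbreviate $g=\varphi_{i,\chi,\mu}$, $g'=\varphi_{i',\chi,\mu'}$, and consider $h_\pm=\tfrac{1}{\sqrt 2}(g\pm g')$ and $\tilde h_\pm=\tfrac{1}{\sqrt 2}(g\pm i g')$. The first step is to check that each of $h_\pm,\tilde h_\pm$ again satisfies \eqref{eq:9b} and \eqref{eq:9c}. Expanding $\sum_{j\in\Z}\abs{h_\pm(2^j\omega)}^2$ and $\sum_{j\ge 0}h_\pm(2^j\omega)\overline{h_\pm(2^j(\omega+2\pi m))}$ (and analogously for $\tilde h_\pm$), the pure terms are exactly the left-hand sides of \eqref{eq:9b} and \eqref{eq:9c} for $g$ and for $g'$, while the mixed terms are exactly $\sum_{j\in\Z}g(2^j\omega)\overline{g'(2^j\omega)}$, $\sum_{j\ge 0}g(2^j\omega)\overline{g'(2^j(\omega+2\pi m))}$ and the same two expressions with $g$ and $g'$ interchanged; all of these vanish by \eqref{eq:1} and \eqref{eq:1b}. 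The point requiring care here -- and, in my view, the only genuine obstacle -- is that one needs \eqref{eq:1} and \eqref{eq:1b} also with the roles of the two triples exchanged; this is legitimate because the hypothesis in item~c) is symmetric under $(i,\mu)\leftrightarrow(i',\mu')$. One should also note that \eqref{eq:1} carries a two-sided sum over $j\in\Z$ (matching \eqref{eq:9b}) whereas \eqref{eq:1b} carries a one-sided sum over $j\ge 0$ (matching \eqref{eq:9c}). Once this is done, \cite[Theorem~1.6, Chapter~7]{herweis96} applies again and shows that $\set{\wh{W}^+(x_\la)\sqrt{L}\,g}_{\la}$, $\set{\wh{W}^+(x_\la)\sqrt{L}\,g'}_{\la}$, $\set{\wh{W}^+(x_\la)\sqrt{L}\,h_+}_{\la}$ and $\set{\wh{W}^+(x_\la)\sqrt{L}\,\tilde h_+}_{\la}$ are all Parseval frames for $L^2(\wR_+)$.

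The second step is the polarization. For fixed $\varphi,\varphi'\in L^2(\wR_+)$ introduce
\[
 B(a,b)=\sum_{\la\in\Lambda}\scal{\varphi}{\wh{W}^+(x_\la)a}_2\,\scal{\wh{W}^+(x_\la)b}{\varphi'}_2,\qquad a,b\in L^2(\wR_+),
\]
which is conjugate-linear in $a$ and linear in $b$, since $\wh{W}^+(x_\la)$ is linear. Applying to each of the four Parseval frames above the identity $\scal{\varphi}{\varphi'}=\sum_i\scal{\varphi}{\psi_i}\scal{\psi_i}{\varphi'}$ (valid for any Parseval frame) yields $B(g,g)=B(g',g')=\tfrac1L\scal{\varphi}{\varphi'}$, together with
\[
 \tfrac12\bigl(B(g,g)+B(g,g')+B(g',g)+B(g',g')\bigr)=\tfrac1L\scal{\varphi}{\varphi'}
\]
and
\[
 \tfrac12\bigl(B(g,g)+iB(g,g')-iB(g',g)+B(g',g')\bigr)=\tfrac1L\scal{\varphi}{\varphi'}.
\]
Using the first relations, the last two collapse to $B(g,g')+B(g',g)=0$ and $B(g,g')-B(g',g)=0$, hence $B(g,g')=0$, which is exactly \eqref{eq:30} with the two generators $\varphi_{i,\chi,\mu}$ and $\varphi_{i',\chi,\mu'}$. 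Everything outside the cross-term bookkeeping of the first step is a routine consequence of \cite{herweis96} and of linearity.

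I should add that part~b) could alternatively be proved by re-running the Hern\'andez--Weiss computation of $\sum_{\la}\scal{\varphi}{\wh{W}^+(x_\la)g}_2\scal{\wh{W}^+(x_\la)g'}{\varphi'}_2$ in the frequency domain with two distinct generators in the two slots (splitting $\R_+$ into lattices of length $2^{-j}$ and applying Parseval for Fourier series), which produces a ``diagonal'' term governed by $\sum_{j}g(2^j\omega)\overline{g'(2^j\omega)}$ -- killed by \eqref{eq:1} -- and off-diagonal terms governed by $\sum_{j\ge0}g(2^j\omega)\overline{g'(2^j(\omega+2\pi m))}$ over odd $m$ -- killed by \eqref{eq:1b}. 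I would prefer the polarization route above, since it is self-contained given the statement of \cite[Theorem~1.6, Chapter~7]{herweis96} and avoids re-deriving the convergence and interchange-of-sums estimates.
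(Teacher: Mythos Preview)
Your proof is correct. Part~a) is identical to the paper's argument. For part~b) you take a genuinely different route: the paper quotes Lemma~1.18 of \cite{herweis96} (polarized to allow two distinct generators in the two slots), which yields an explicit integral identity whose diagonal piece is annihilated by \eqref{eq:1} and whose off-diagonal pieces are annihilated by \eqref{eq:1b}; this is precisely the alternative you sketch at the end. Your primary argument instead stays entirely at the level of Theorem~1.6: you verify that the combined generators $h_\pm,\tilde h_\pm$ again satisfy \eqref{eq:9b}--\eqref{eq:9c} (this is where \eqref{eq:1}--\eqref{eq:1b}, in both orderings, are consumed), conclude that they too generate Parseval frames, and then polarize the resulting identities for the sesquilinear form $B$. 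The payoff of your route is that it needs no reference beyond Theorem~1.6, which was already invoked for part~a), and it sidesteps the convergence and sum-interchange bookkeeping underlying Lemma~1.18; the paper's route is shorter on the page because it offloads that bookkeeping to the cited lemma. One small point worth making explicit in your write-up: the splitting $B(h_+,h_+)=\tfrac12\bigl(B(g,g)+B(g,g')+B(g',g)+B(g',g')\bigr)$ requires each of the four series on the right to converge absolutely, which follows from Cauchy--Schwarz once you know (from part~a)) that the coefficient sequences for $g$ and $g'$ lie in $\ell^2$.
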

\begin{proof}
The fact that \eqref{eq:27} is equivalent to~\eqref{eq:9b} and~\eqref{eq:9c} is one of the fundamental results at the root of wavelet
frames, see Theorem~1.6 of \cite{herweis96}.   The fact
that~\eqref{eq:1} and~\eqref{eq:1b} imply~\eqref{eq:30} follows by
Lemma~1.18 of \cite{herweis96}, which, by polarization, can be rewritten as
\begin{align*}
   & 2\pi \sum_{\la\in\Lambda}
    \scal{\varphi}{\wh{W}^+(x_\la)\varphi_{i,\chi,\mu}}_2 
\scal{\wh{W}^+(x_\la)\varphi_{i',\chi,\mu'}}{\varphi'}_2 \\
= & \int_{\wR}  \varphi(\omega)\overline{\varphi'(\omega)}\sum_{j\in\Z}
  \varphi_{i',\chi,\mu'}(2^j\omega)\overline{\varphi_{i,\chi,\mu}(2^j\omega)}\di\omega
  \\
+ & \int_{\wR} \overline{\varphi'(\omega)} \sum_{j\in\Z} \sum_{m\in2\Z+1}
  \varphi_{i',\chi,\mu'}(\omega+2^j2\pi m) h_m(2^j\omega)\di\omega ,
\end{align*}   
where
\[
h_m(\omega)= \sum_{n=0}^{+\infty} \varphi_{i',\chi,\mu'}(2^n
\omega) \overline{\varphi_{i,\chi,\mu}(2^n(\omega+2\pi m))} .
\]
Indeed, \eqref{eq:1} implies that the first summand vanishes, whereas \eqref{eq:1b} implies that $h_m$ vanish for all odd integers, hence the second summand is zero.
\end{proof}

\begin{proof}[Proof of Theorem~\ref{thm:main}]
By means of the unitary operator $S$, we can prove the result for the family of vectors
$$ S\pi(x_{\la,\ell}){\eta} \qquad \la \in \Lambda, \quad \ell = 1,\dots,L $$
in the space $ \bigoplus_{i\in\N}L^2(\wR_+,\hh_i) $, which by \eqref{identifications} and \eqref{identifications2} can be identified with
\[ L^2(\wR^d) = \bigoplus_{i\in\N,\chi\in\wh{F}} \bigoplus_{\mu=1}^{m_{i,\chi}}
L^2(\wR_+,\hh_{\chi} \otimes \Cc\set{\eps_\mu}) . \]

We will apply Lemma \ref{lem:4}.
So, let us fix $i,i'\in\N$, $\chi,\chi'\in\wh{F}$ and $ \mu \in \{1,\ldots,m_{i,\chi}\} $, $ \mu' \in \{1,\ldots,m_{i,\chi'}\} $.
Given $\varphi,\varphi'\in L^2(\wR_+)$ and $w\in \hh_{\chi}$, $w'\in\hh_{\chi'}$, we look at the quantity
\begin{align*}
  &A(i,\chi,\mu,i',\chi',\mu') \\
   = &\sum_{\la\in\Lambda}\sum_{\ell=1}^L \scal{\varphi\otimes w\otimes \eps_\mu}{P_{i,\chi,\mu}S\pi(x_{\la,\ell}){\eta}}_2
  \scal{P_{i',\chi',\mu'}S\pi(x_{\la,\ell}) {\eta}}{\varphi'\otimes
  w'\otimes \eps_{\mu'}}_2 .
\end{align*}
Recall that, since $x_{\la,\ell}=(x_\lambda, R_\ell)$, 

\begin{equation*}
  P_{i,\chi,\mu}S\pi(x_{\la,\ell}) {\eta} =  \wh{W}^+(x_{\la})
  \varphi_{i,\chi,\mu} \otimes \chi(R_\ell)  w^{\chi}_{\delta_\mu} \otimes \eps_\mu ,
\end{equation*}
hence we have
\begin{align*}
  A(i,\chi,\mu,i',\chi',\mu') &=  \left(\sum_{\la\in\Lambda}
  \scal{\varphi}{\wh{W}^+(x_\la)\varphi_{i,\chi,\mu}}_2
  \scal{\wh{W}^+(x_\la)\varphi_{i',\chi',\mu'}}{\varphi'}_2\right)\\
&\times  \left(\sum_{\ell=1}^L \scal{w}{\chi(R_\ell) w^{\chi}_{\delta_\mu} }_{\hh_\chi}
  \scal{\chi'(R_\ell) w^{\chi'}_{\delta_{\mu'}}}{w'}_{\hh_{\chi'}} \right) ,
\end{align*} 
where the series are absolutely summable because of~\eqref{eq:27} and the
Cauchy-Schwarz inequality.

From the Schur orthogonality relations applied to the pair of
irreducible representations $\chi,\chi'$ of $F$, we know that
\begin{equation*}
  %\label{eq:28}
  \frac{1}{L} \sum_{\ell=1}^L \scal{w}{\chi(R_\ell) w^{\chi}_{\delta_\mu}
  }_{\hh_\chi} \scal{\chi'(R_\ell) w^{\chi'}_{\delta_{\mu'}}}{w'}_{\hh_{\chi'}}
  =
  \begin{cases}
    0 & \chi\neq \chi' \\
    \frac{1}{d_{\chi}} \scal{
      w^{\chi}_{\delta_{\mu'}}}{w^{\chi}_{\delta_\mu}}_{\hh_\chi}
    \scal{w}{w'}_{\hh_\chi} & \chi = \chi' .
  \end{cases}
\end{equation*}
Thus, if $\chi\neq\chi'$, we get $A(i,\chi,\mu,i',\chi',\mu')=0$.
From now on assume $\chi=\chi'$, for which
\begin{align*}
  A(i,\chi,\mu,i',\chi,\mu') &= L \left(\sum_{\la\in\Lambda}
  \scal{\varphi}{\wh{W}^+(x_\la)\varphi_{i,\chi,\mu}}_2
  \scal{\wh{W}^+(x_\la)\varphi_{i',\chi,\mu'}}{\varphi'}_2\right)\\
&\times \frac{1}{d_{\chi}} \scal{
      w^{\chi}_{\delta_{\mu'}}}{w^{\chi}_{\delta_\mu}}_{\hh_\chi}
    \scal{w}{w'}_{\hh_\chi} .
\end{align*}
If $(i,\mu)\neq (i',\mu')$ and $\delta_\mu\neq \delta_{\mu'}$, then  $A(i,\chi,\mu,i',\chi,\mu')=0$ since the
family $w^\chi_1,\ldots,w^\chi_{d_\chi}$ is orthogonal. If $(i,\mu)\neq (i',\mu')$ 
but $\delta_\mu= \delta_{\mu'}$,  then by~\eqref{eq:30} it follows that $A(i,\chi,\mu,i',\chi,\mu')=0$.
Finally, if $(i,\mu)=(i',\mu')$, then~\eqref{eq:18} yields
\begin{align*}
  A(i,\chi,\mu,i,\chi,\mu) &= L \left(\sum_{\la\in\Lambda}
  \scal{\varphi}{\wh{W}^+(x_\la)\varphi_{i,\chi,\mu}}_2
  \scal{\wh{W}^+(x_\la)\varphi_{i,\chi,\mu}}{\varphi'}_2\right)
                             \scal{w}{w'}_{\hh_\chi} \\
& = \scal{\varphi}{\varphi'}_2 \scal{w}{w'}_{\hh_\chi} \\
& =\scal{\varphi\otimes w\otimes\eps_\mu}{\varphi'\otimes w'\otimes\eps_\mu}_2  ,
\end{align*}
where the second equality is a consequence of~\eqref{eq:27}.

Summarizing the above results in a single equation,  we obtain
\[
 A(i,\chi,\mu,i',\chi',\mu') =
 \begin{cases}
 \scal{\varphi\otimes w\otimes\eps_\mu}{\varphi'\otimes w'\otimes\eps_\mu}_2 & \text{if $\chi= \chi'$ and $(i,\mu)= (i',\mu')$,}\\
 0 & \text{if $\chi\neq \chi'$ or $(i,\mu)\neq (i',\mu')$.}
 \end{cases}
\]
The conclusion follows from Lemma~\ref{lem:4}.
\end{proof}

\vskip0.2truecm

\section*{Acknowledgments}

G.\ S.\ Alberti was supported by the ERC Advanced Grant Project MULTIMOD-267184. S. Dahlke was supported by Deutsche Forschungsgemeinschaft (DFG), Grant DA 360/19--1. F. De Mari and E.~De Vito  were partially supported by  Progetto PRIN 2010-2011  ``Variet\`a reali e complesse: geometria,
  topologia e analisi armonica''. They are members of the Gruppo Nazionale per l'Analisi Matematica, 
la Probabilit\`a e le loro Applicazioni (GNAMPA) of the Istituto Nazionale di Alta Matematica (INdAM).

The authors would like to thank Fulvio Ricci
for suggesting the extension from the 2D case to  the higher dimensional case.  

%    Bibliographies can be prepared with BibTeX using amsplain,
%    amsalpha, or (for "historical" overviews) natbib style.
%\bibliographystyle{unsrt}
%    Insert the bibliography data here.
%\bibliography{biblio}

\end{document}